\newcommand{\mathd}{\mathrm{d}}
\newcommand{\ff}{\mathcal{B}}
\newtheorem{lemma}{Lemma}
\newtheorem{proposition}{Proposition}
\newtheorem{theorem}{Theorem}
\theoremstyle{remark}
\newtheorem{note}{Note}
\renewcommand{\Re}{\operatorname{Re}}
\renewcommand{\Im}{\operatorname{Im}}
\newcommand{\cl}[1]{\underset{#1}\cdot}
\newcommand{\Nil}{\textup{Nil}_3}
\begin{document}
\bibliographystyle{alpha}

\title{The spinor representation formula in 3 and 4 dimensions}
\author{Pascal Romon\footnote{
Universit\'e Paris-Est, LAMA (UMR 8050), 
UPEMLV, UPEC, CNRS, 
F-77454, Marne-la-Vall\'ee, France.
\hfill \break
pascal.romon@univ-mlv.fr, julien.roth@univ-mlv.fr
} , Julien Roth\footnotemark[\value{footnote}]
}
\date{}
\maketitle

\begin{abstract}
In the literature, two approaches to the Weierstrass representation formula using spinors are known,
one explicit, going back to Kusner~\& Schmitt, and generalized by Konopelchenko and Taimanov, 
and one abstract due to Friedrich, Bayard, Lawn and Roth. In this article, we show that both points of view 
are indeed equivalent, for surfaces in $\mathbb{R}^3$, ${\rm Nil}_3$ and $\mathbb{R}^4$. 
The correspondence between the equations of both approaches is explicitly given and as a consequence
we derive alternate (and simpler) proofs of these previous theorems.
%

\end{abstract}

\medskip

\noindent \emph{Keywords:} Isometric immersions, Weierstrass formula, Dirac equation, Killing spinor.
\\

\noindent \emph{2010 MSC:} Primary 53C27, Secondary 53B25, 53C30


\section{Historical background }\label{sec:intro}

The starting point of all representation formulae of submanifolds is the
celebrated {\emph{Weierstrass--Enneper formula}} for minimal surfaces in
$\mathbb{R}^3$: any minimal immersion $f$ is given locally in conformal
coordinates $z = x + i y$ by
\[ f ( z) = \Re \int^z \left( \frac{1 - g^2}{2}, i \, \frac{1 + g^2}{2},
   g \right) h \mathd z , \]
where $g$ is meromorphic and corresponds to the (stereographic) projection of
the Gauss map, $h$ is holomorphic, and $| h |  ( 1 + | g |^2)$ is neither $0$
nor $+ \infty$. This allows to describe locally (and sometimes globally) all
minimal surfaces, and plays a key role in understanding and classifying them.
However, this formula turns out to characterize many more surfaces than just
the minimal ones: namely, and up to a cosmetic change, all surfaces, provided
they are given in conformal coordinates.

Indeed, in 1979, Kenmotsu \cite{Ke} generalized the formula to any two-dimensional
conformal immersion in $\mathbb{R}^3$, with prescribed (nonzero) mean
curvature $H$, by proving that the Gauss map and mean curvature actually
determine the surface (in the minimal case, an extra function $h$ is
required). To that effect, he studied the equation satisfied by the Gauss map.

In 1996, Kusner \& Schmitt \cite{KS}, following an idea of Sullivan, proposed a
similar albeit simpler expression, called the {\emph{spinor representation
formula}}, which gives locally all conformal immersions in $\mathbb{R}^3$:
\begin{equation}   \tag{W3}
  f (z) = \Re \int^z \left( i \, \frac{\bar{\psi}_2^2 + \psi_1^2}{2},
  \frac{\bar{\psi}_2^2 - \psi_1^2}{2} , \psi_1  \bar{\psi}_2 \right)
  \mathd z \label{eq:Weierstrass-spin}
\end{equation}
where $\psi = ( \psi_1, \psi_2)$ is a $\mathbb{C}^2$-valued map satisfying
the following (nonlinear) ``Dirac equation with potential''
\begin{equation}   \tag{DP}
  \left( \left(\begin{array}{cc}
    0 & \partial / \partial z\\
    - \partial / \partial \bar{z} & 0
  \end{array}\right) + \left(\begin{array}{cc}
    U & 0\\
    0 & V
  \end{array}\right) \right)  \left(\begin{array}{c}
    \psi_1\\
    \psi_2
  \end{array}\right) = 0 \label{eq:Dirac-3D}
\end{equation}
where $U = V = He^{\rho} / 2$ is called the potential, and the metric is
$\mathd s^2 = e^{\rho}  | \mathd z |^2 = ( | \psi_1 |^2 + | \psi_2 |^2)  |
\mathd z |^2$. In the minimal case, $H = 0$, so $\psi_1, \bar{\psi}_2$ are
holomorphic and we identify easily the meromorphic data: $h = i \psi_1^{- 1}$, $g = - i
\bar{\psi}_2 / \psi_1$. Note that $\psi$ is defined up to sign and $\psi_1,
\bar{\psi}_2$ behave like $\sqrt{\mathd z}$ w.r.t. coordinate
change{\footnote{A property which indicates that $\psi$ is {\emph{not}} really
a spinor.}}.
\\

A few years later, Konopelchenko \cite{Ko} and Taimanov \cite{T1} extended that
formulation to conformally parametrized surfaces in $\mathbb{R}^4 \simeq
\mathbb{C}^2$ by
\begin{equation}   \tag{W4}
  f ( z) = \int^z \left(\begin{array}{c}
    s_1 t_1\\
    s_1 t_2
  \end{array}\right) \mathd z + \left(\begin{array}{c}
    - \bar{s}_2  \bar{t}_2\\
    \bar{s}_2  \bar{t}_1
  \end{array}\right) \mathd \bar{z} \label{eq:weierstrass-KT}
\end{equation}
where $s_1, s_2, t_1, t_2$ are complex valued functions, obviously defined up
to a global sign change, satisfying the following Dirac type equation, with
complex potential~$h$
\begin{equation}
  \left\{ \begin{array}{l}
    \partial s_1/\partial \bar{z} = - \bar{h}  \overline{s_2}\\
    \partial \bar{s}_2/\partial z = hs_1
  \end{array} \right. \hspace{1em} \text{and$\hspace{1em} \left\{
  \begin{array}{l}
	 \partial t_1/\partial \bar{z} = - h \overline{t_2}\\
    \partial \bar{t}_2/\partial z = \bar{h} t_1
  \end{array} \right.$} \hspace{1em} \text{where } | h | = \frac{\| \vec{H} \|
  e^{\rho}}{2}  \label{eq:dirac-4D-KT} 
\end{equation}
($\vec{H}$ stands for the mean curvature vector and $e^{\rho}$ is again the
conformal factor). The phase of $h$ will be understood later. A particular
case of this equation was obtained independently by H\'elein \& Romon in
their study of Lagrangian immersions in euclidean space \cite{HeRo}. There,
$t_1 = \cos \frac{\beta}{2}$ and $t_2 = \sin \frac{\beta}{2}$, where $\beta$
is the lagrangian angle $\arg \det \mathd f$. This property has led to
deep results in the theory of Hamiltonian-stationary lagrangian surfaces (see
\cite{Le, McRo} and also the surprising parametrization of Aiyama \cite{A}). Finally,
Taimanov \cite{T2} expanded the formula \eqref{eq:Weierstrass-spin} to three
dimensional homogeneous manifolds. The formula itself barely changes (see below),
as well as the Dirac equation \eqref{eq:Dirac-3D}, except for the potential.
For example, if the ambient space is $\Nil$ endowed with a
left-invariant metric, then
\[ 
U = V = \frac{He^{\rho}}{2} + \frac{i}{4}  ( | \psi_2 |^2 - | \psi_1^2 |)
\]

However, these representation formulae were purely local and computational in
nature, and it was not clear what the relationship they had with the spin
bundle until Friedrich bridged the gap in \cite{F1}. He showed that any surface $M$
immersed in $\mathbb{R}^3$ inherits a spinor $\varphi$ (given an
arbitrary choice of a parallel spinor), which satisfies
Dirac's equation with a potential, and can be used to reconstruct the
immersion. This property was extended by Morel to $\mathbb{S}^3$ and
$\mathbb{H}^3$ \cite{Mo}, by Roth to homogeneous spaces $E_{\kappa, \tau}$ \cite{R},
and most recently by Bayard, Lawn \& Roth to $\mathbb{R}^4$ \cite{BLR}.

These abstract approaches all follow B\"ar's remark \cite{Ba} that an ambient spinor,
restricted to an immersed submanifold $M$, can be identified with an intrinsic
spinor (or, in some cases, with a section of a twisted spin bundle). If the
ambient spinor field $\tilde{\varphi}$ satisfies some particular PDE, e.g. is
parallel or Killing, then the induced spinor $\varphi$ also satisfies a PDE,
typically a Dirac equation with a potential linked to the extrinsic geometric
of $M$. Conversely (and non trivially), the Dirac equation with potential is
shown to imply Gauss and Codazzi's equations (and Ricci's when needed), thus
guaranteeing the (local) existence of an immersion with prescribed extrinsic
data.
\\

Our goal in this paper is to clarify and explicit the relationship between
these two point of view, which has not been done before. 
In particular, we show how the concrete representation formulae and Dirac equations
are actually specific gauge choices of the general abstract formulae, explaining 
their simpler appearance.
We give new and more direct proofs of theorems \ref{thm:FR} and \ref{thm:BLR} below,
avoiding the cumbersome Gauss, Codazzi and Ricci equations (although they are 
present implicitly), including their formulation in $E_{\kappa, \tau}$ written
in \cite{D}. Note that computations cannot be completely avoided.

We will recall, first in three, then in four dimensions, how one obtains the
concrete formulae cited above, then present briefly the induced spinor point
of view, and finally write down the correspondence between these two.
\\

\emph{Acknowledgements.} The authors wish to thank Marie-Am\'elie Lawn 
for her interesting suggestions.
\\


\section{Conformal immersions in dimension three}

We consider in this section the case of surfaces in $\mathbb{R}^3$ and in
metric Lie groups of dimension 3, endowed with a left-invariant metric, and
focus on the Heisenberg group $\Nil=\Nil(\tau)$ where $\tau$ is the fiber torsion 
(see~\cite{D}). When $\tau = 0$ we recover $\mathbb{R}^3$. This approach can be
extended to other groups and homogeneous spaces without much work, and we
direct the reader to \cite{T2}.

\subsection{Coordinates and the Maurer--Cartan form}

The formula \eqref{eq:Weierstrass-spin} is based on a simple remark: Given an
immersion $f : M \rightarrow \mathbb{R}^3$ with coordinate $z = x + i y$, then
$f$ is conformal if and only if the derivative $\partial f / \partial z =
\frac{1}{2}  ( \partial f / \partial x - i \partial f / \partial y)$ lies in the
quadric
\[ Q = \left\{ Z = ( Z_1, Z_2, Z_3) \in \mathbb{C}^3, \; Z_1^2 + Z_2^2 +
   Z_3^2 = 0 \right\} \]
(note that $\mathbb{P} Q \subset \mathbb{C}\mathbb{P}^2$ is the grassmannian
$\text{Gr}_{2, 3}$). We can parametrize $Q$ by two complex numbers with what
is usually called the Segre map
\[ Q \ni Z = \left( \frac{i}{2}  ( \bar{\psi}_2^2 + \psi_1^2), \frac{1}{2}  (
   \bar{\psi}_2^2 - \psi_1^2), \psi_1  \bar{\psi}_2 \right) \]
and one recovers immediately \eqref{eq:Weierstrass-spin}. If the ambient space
$\tilde{M}$ is a Lie group, let $( E_1, E_2, E_3)$ be the orthonormal (moving)
frame generated by left translation from any choice of orthonormal frame at
the identity $e$. Then $( E_1, E_2, E_3)$ trivializes $T M$, in other words,
the tangent vectors can be mapped to $\mathbb{R}^3$ in this frame, or
equivalently, one identifies vectors mapped by the differential of $f^{- 1}$
to $T_e M$ with vectors in $\mathbb{R}^3$ (or $T_e M^\mathbb{C}$ with $\mathbb{C}^3$):
\[ Z = \left(\begin{array}{c}
     Z_1\\
     Z_2\\
     Z_3
   \end{array}\right) \simeq f^{- 1}  \frac{\partial f}{\partial z} \]
and $Z$ lies again in $Q$ if and only if $f$ is (weakly-)conformal. Hence the same
formula \eqref{eq:Weierstrass-spin} holds, with $f^{- 1} \partial f / \partial
z$ playing the role of the derivative:
\begin{equation}
  f^{- 1}  \frac{\partial f}{\partial z} ( z) = \frac{i}{2}  ( \bar{\psi}_2^2
  + \psi_1^2) E_1 + \frac{1}{2}  ( \bar{\psi}_2^2 - \psi_1^2) E_2 + \psi_1 
  \bar{\psi}_2 E_3 . \label{eq:Weierstrass-spin-group}
\end{equation}
Note that integration is less obvious than in the trivial translational group
$\mathbb{R}^3$.
\\

Conversely, the formula \eqref{eq:Weierstrass-spin-group} gives an immersion
when integrated over a simply connected planar domain, provided the
Maurer--Cartan form $\alpha = f^{- 1} \mathd f$ satisfies the integrability
condition $\mathd \alpha + \frac{1}{2}  [ \alpha \wedge \alpha] = 0$. Let us
write down the equation in two explicit cases: $\mathbb{R}^3$, where the
bracket is identically zero, and $\Nil ( \tau)$ where the only nonzero
bracket is $[ E_1, E_2] = 2 \tau E_3$ (and $E_3$ is the vertical fiber unit
tangent vector). We recover the euclidean $\mathbb{R}^3$ by letting $\tau$ go
to $0$. The integrability equation 
boils down to, in the $( E_1, E_2, E_3)$ frame at identity,
\begin{multline} 
\renewcommand*{\arraystretch}{1.2}
    - \frac{\partial}{\partial y}  \left(\begin{array}{c}
     - \frac{1}{2} \Im ( \overline{\psi_{}}_2^2 + \psi_1^2)\\
     \frac{1}{2} \Re ( \bar{\psi}_2^2 - \psi_1^2)\\
     \Re ( \bar{\psi}_2 \psi_1)
   \end{array}\right) + \frac{\partial}{\partial x}  \left(\begin{array}{c}
     - \frac{1}{2} \Re ( \bar{\psi}_2^2 + \psi_1^2)\\
     - \frac{1}{2} \Im ( \bar{\psi}_2^2 - \psi_1^2)\\
     - \Im ( \bar{\psi}_2 \psi_1)
   \end{array}\right) 
   \\
     + 2 \tau \left(\begin{array}{c}
     0\\
     0\\
     \frac{1}{4}  ( | \psi_2 |^2 + | \psi_1 |^2)  ( | \psi_2 |^2 - | \psi_1
     |^2)
   \end{array}\right) = 0
\end{multline}
that is
\[ 
\renewcommand*{\arraystretch}{1.2}
\left\{ \begin{array}{l}
     \Im ( ( \bar{\psi}_2)_y  \bar{\psi}_2) - \Re ( (
     \bar{\psi}_2)_x  \bar{\psi}_2) + \Im ( ( \psi_1)_y \psi_1) -
     \Re ( ( \psi_1)_x \psi_1) = 0
     \\
     - \Re ( ( \bar{\psi}_2)_y  \bar{\psi}_2) - \Im ( (
     \bar{\psi}_2)_x  \bar{\psi}_2) + \Re ( ( \psi_1)_y \psi_1) +
     \Im ( ( \psi_1)_x \psi_1) = 0
     \\
     - \Im [ ( ( \bar{\psi}_2)_x + i ( \bar{\psi}_2)_y) \psi_1] -
     \Im [ ( ( \psi_1)_x + i ( \psi_1)_y)  \bar{\psi}_2] 
     \\
     \hspace*{12em} + \frac{\tau}{2}  ( | \psi_2 |^2 + | \psi_1 |^2)  ( | \psi_2 |^2 - | \psi_1|^2) = 0 \, .
   \end{array} \right. \]
The first two lines amount to $( \psi_2)_z \psi_2 + ( \psi_1)_{\bar{z}} \psi_1
= 0$, which implies the existence of a function $U^{}$ such that
\[ U = \frac{1}{\psi_2}  \frac{\partial \psi_1}{\partial \bar{z}} = -
   \frac{1}{\psi_1}  \frac{\partial \psi_2}{\partial z} \]
(at least when $\psi_1, \psi_2$ do not vanish, and by construction, they do
not vanish simultaneously). We obtain the equation \eqref{eq:Dirac-3D} with
some complex valued potential $U$. However the third line yields
\begin{eqnarray*}
  0 & = & 2 \Im \left[ \frac{\partial \bar{\psi}_2}{\partial \bar{z}}
  \psi_1 + \frac{\partial \psi_1}{\partial \bar{z}}  \bar{\psi}_2 \right] -
  \frac{\tau}{2}  ( | \psi_2 |^2 + | \psi_1 |^2)  ( | \psi_2 |^2 - | \psi_1
  |^2)\\
  & = & 2 \Im [ - \bar{U} | \psi_1 |^2 + U | \psi_2 |^2] -
  \frac{\tau}{2}  ( | \psi_2 |^2 + | \psi_1 |^2)  ( | \psi_2 |^2 - | \psi_1
  |^2)\\
  & = & 2 \Im [ U ( | \psi_1 |^2 + | \psi_2 |^2)] - \frac{\tau}{2}  ( |
  \psi_2 |^2 + | \psi_1 |^2)  ( | \psi_2 |^2 - | \psi_1 |^2)
\end{eqnarray*}
so that $\Im U = \frac{\tau}{4}  ( | \psi_2 |^2 - | \psi_1 |^2)$ in
$\Nil ( \tau)$, and vanishes in $\mathbb{R}^3$. Other
three-dimensional groups follow the same lines (see \cite{T2} for a full list).
Note that the Dirac equation {\emph{per se}} does not fully determine the
potential $U$, but it can be derived from the representation formula.

\subsection{Induced spinors }\label{sec:induced-spinors}

We assume here a basic acquaintance with spinor and Clifford algebras (see
\cite{LM} or \cite{F2} for an introduction). For any spin manifold $M^m$, we denote by
$\text{Spin}(M)$ the spin bundle and 
$\Sigma (M) = \text{Spin} (M) \times_{\chi_m} \Sigma_n$ the associated spinor bundle,
where $\chi_m$ is a representation
of $\text{Spin} (M)$ on $\Sigma_m$. We recall here briefly the idea in \cite{F1}
and \cite{R} in dimension three, and we will give a more explicit formulation in later
sections.

Let $\tilde{M}$ be a three dimensional spin manifold{\footnote{It suffices
here to assume that $\tilde{M}$ is orientable.}}, $M$ an oriented immersed surface in
$\tilde{M}$. Because spinor frames in $M$ can be identified with spinor frames
of $\tilde{M}$ along $M$, and both $\text{Spin} ( 2)$ and $\text{Spin} ( 3)$
have a canonical representation on $\Sigma_2 = \Sigma_3 =\mathbb{C}^2$, it is
possible to identify an ambient spinor $\tilde{\varphi}_{| M }$
restricted to $M$ with an intrinsic spinor $\varphi$ of $M$ (when endowed with
the induced metric). All identifications are isomorphic (by the universality
property) and differ by the corresponding Clifford multiplication. In \cite{R}, the
identification is such that
\[ \forall X \in T M \subset T \tilde{M}, \quad X \cl{M} \varphi = ( X \cdot \nu \cdot
   \tilde{\varphi})_{| M } \]
where $\nu$ is the unit (oriented) normal along $M$, $\cl{M}$ denotes the
Clifford multiplication in $M$ and $\cdot$ the one in $\tilde{M}$. We will
rather use a more direct identification, which will simplify the calculations
below, and satisfies the simpler relation
\begin{equation}
  \forall X \in T M \in T \tilde{M}, X \cl{M} \varphi = ( X \cdot
  \tilde{\varphi})_{| M } \, . \label{eq:Clifford-induit}
\end{equation}
Hence from now on, we will make no difference between Clifford multiplication
in $M$ and in $\tilde{M}$. We will rederive the computations in \cite{R} with this
new convention.
\\

A spinor field is the equivalence class of a couple $(s,u)$, where $s$ is 
a section of $\text{Spin}(\tilde{M})$ (a moving spinor frame) and $u$ is 
a $\Sigma_3$-valued function, under the equivalence relation induced by $\chi_3$: 
\[ 
( s, u) \sim ( s a^{- 1}, \chi_3 ( a) u)
\]
for all $a : \tilde{M} \rightarrow \text{Spin} (3) =\mathbb{S}^3$. 
When $\tilde{M}$ is a Lie group, we trivialize $\text{Spin} ( \tilde{M})$ by
a lift $\tilde{s}$ of the left-invariant moving frame defined above, and
define $\tilde{\varphi}$ to be {\emph{constant}} if $\tilde{\varphi}$ is (the
equivalence class of) the couple $( \tilde{s}, u)$ 
and $u$ is constant. (This property
is obviously independent of the choice of the reference spinor frame
$\tilde{s}$.) In $\mathbb{R}^3$, such a spinor is \emph{parallel}, however that is
not the case in general Lie groups. Again, taking as example $\Nil(\tau)$,
where $\tilde{\nabla}_X E_3 = \tau X \times E_3$, we have, on a constant
spinor field $\tilde{\varphi}$
\[ \tilde{\nabla}_{E_1} \tilde{\varphi} = \frac{\tau}{2} E_1 \cdot
   \tilde{\varphi} \hspace{1em} \tilde{\nabla}_{E_2}  \tilde{\varphi} =
   \frac{\tau}{2} E_2 \cdot \tilde{\varphi} \hspace{1em} \tilde{\nabla}_{E_3} 
   \tilde{\varphi} = - \frac{\tau}{2} E_3 \cdot \tilde{\varphi} \]
because
\[ \tilde{\nabla}_X  \tilde{\varphi} = ( \tilde{s}, X u) + \frac{1}{2} 
   \sum_{1 \le p < q \le 3} \langle \tilde{\nabla}_X E_p, E_q
   \rangle E_p \cdot E_q \cdot \tilde{\varphi} \]
where $\tilde{\nabla}$ denotes both the connection on $T \tilde{M}$ and on
$\Sigma \tilde{M}$, depending on the context. (Note: The case of
$\mathbb{R}^3$ can be obtained by letting $\tau = 0$, and we recover
parallelism.)

Let $M$ be an oriented immersed surface, with normal vector $\nu$ and let $T$ be the
projection of $E_3$ on $T M$, so that $E_3 = T + \lambda \nu$. Let $S = -
\mathd \nu$ be the shape operator. Then, by the spinorial Gauss relation,
\begin{eqnarray*}
  \nabla_X \varphi & = & \tilde{\nabla}_X  \tilde{\varphi}_{| M } -
  \frac{1}{2}  ( SX) \cdot \nu \cdot \varphi 
  \\
  & = & \frac{\tau}{2}  ( X^1 E_1 + X^2
  E_2 - X^3 E_3) \cdot \varphi - \frac{1}{2}  ( SX) \cdot \omega_2 \cdot
  \varphi\\
  & = & \frac{\tau}{2}  ( X \cdot \varphi - 2 \langle X, T \rangle E_3 \cdot
  \varphi) - \frac{1}{2}  ( SX) \cdot \omega_2 \cdot \varphi\\
  & = & \frac{\tau}{2}  \left( - X \cdot \underset{\omega_3}{\underbrace{\nu
  \cdot e_1 \cdot e_2}} \cdot \varphi \right) - \tau \langle X, T \rangle  ( T
  \cdot \varphi + \lambda \nu \cdot \varphi) - \frac{1}{2}  ( SX) \cdot
  \omega_2 \cdot \varphi\\
  & = & \frac{\tau}{2} X \cdot \varphi - \tau \langle X, T \rangle  ( T \cdot
  \varphi + \lambda \omega_2 \cdot \varphi) - \frac{1}{2}  ( SX) \cdot
  \omega_2 \cdot \varphi
\end{eqnarray*}
where $\omega_2 = e_1 \cdot e_2 \in C \ell_2 (M)$ (for any oriented
orthonormal frame $( e_1, e_2)$ of $M$), and $\omega_3 = E_1 \cdot E_2 \cdot
E_3 = e_1 \cdot e_2 \cdot \nu \in C \ell_3 ( \tilde{M})$ are called the
{\emph{real volume elements}} (we used that $\chi_3 ( \omega_3) = - \text{id}$ to
infer $\nu \cdot \varphi = \omega_2 \cdot \varphi$). Finally,
$\tilde{\varphi}$ is constant if and only if
\begin{equation}  \tag{C}
  \nabla_X \varphi = - \frac{1}{2}  ( SX) \cdot \omega_2 \cdot \varphi +
  \frac{\tau}{2} X \cdot \varphi - \tau \langle X, T \rangle  ( T \cdot
  \varphi + \lambda \omega_2 \cdot \varphi) \label{eq:constant-3D} .
\end{equation}
This implies in particular that $| \varphi |$ is constant. Applying the
Dirac operator to~$\varphi$, 
we end up with the following Dirac equation
\begin{equation}
  D\varphi=H\omega_2\cdot\varphi+\tau\lambda(\lambda\varphi- T\cdot\omega_2\cdot\varphi) \label{eq:Dirac-3D-theorique}
\end{equation}
We now quote the following
\begin{theorem}[Friedrich, Roth] \label{thm:FR}
  
  Given a riemannian oriented, simply connected surface $M$, a field of symmetric
  endomorphisms $S$, with trace $2 H$ (and in $\Nil(\tau)$ a vector
  field $T$ and function $\lambda$, such that $\lambda^2 + \| T \|^2 =
  1$). Then the following data are equivalent:
  \begin{enumerate}
    \item an isometric immersion from $M^2$ into $\mathbb{R}^3$ (resp.
    $\Nil(\tau)$) such that $S$ is the shape operator, and $H$ the mean
    curvature (and in $\Nil(\tau)$, $E_3 = T + \lambda \nu$),
    
    \item a non-trivial spinor field $\varphi$ solution of
    \eqref{eq:constant-3D}, where the symmetric tensor $S$ and vector field
    $T$ satisfy, for all $X$
    \begin{equation}
      \nabla_X T = \lambda ( SX - \tau JX), \hspace{2em} X (\lambda) = -
      \langle SX - \tau JX, T \rangle  \label{eq:Daniel}
    \end{equation}
    ($J$ being the complex structure),
    
    \item a non-trivial spinor field $\varphi$ of constant norm, solution of
    the Dirac equation~\eqref{eq:Dirac-3D-theorique}, with $T$, $\lambda$
    satisfying:    
    \[
    \mathd \lambda=-2Q_{\varphi}(T)-B(T)-\tau J T,
    \]
    where $B$ is the tensor defined in an orthonormal frame $\{e_1,e_2\}$ 
    by the following matrix
    \[
    \left( \begin{array}{cc}
    2\tau T_1T_2 & \tau\left( T_1^2-T_2^2\right) 
    \\ \\
    \tau\left( T_1^2-T_2^2\right) & -2\tau T_1T_2
    \end{array} \right) ,
    \] 
    and $Q_{\varphi}$ is the Energy-Momentum tensor associated with 
    the spinor field $\varphi$, defined by:
    \[
    Q_{\varphi}(X,Y):=\frac{1}{2}{\rm Re}
    \left\langle X\cdot\nabla_Y\varphi+Y\cdot\nabla_X\varphi,\varphi/|\varphi|^2\right\rangle \,.
    \]
  \end{enumerate}
\end{theorem}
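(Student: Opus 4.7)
The plan is to prove the cycle $(1)\Rightarrow(2)\Rightarrow(3)\Rightarrow(1)$, and to handle the only substantive step, $(3)\Rightarrow(1)$, by descending to the gauge in which the abstract equation reduces to the explicit Dirac equation \eqref{eq:Dirac-3D} of Section~2.1, so that the representation formula \eqref{eq:Weierstrass-spin-group} directly yields the immersion.

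For $(1)\Rightarrow(2)$, I would simply collect what has already been derived before the statement: given the immersion, a constant ambient spinor $\tilde{\varphi}$ restricts via \eqref{eq:Clifford-induit} to a spinor $\varphi$ on $M$ for which the spinorial Gauss formula plus the expression of $\tilde{\nabla}_X E_3$ give exactly \eqref{eq:constant-3D}. The Daniel relations \eqref{eq:Daniel} follow by differentiating $E_3 = T + \lambda \nu$ with the Weingarten formula, using once more $\tilde{\nabla}_X E_3 = \tau X \times E_3$ and the symmetry of $S$.

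For $(2)\Rightarrow(3)$, apply the Dirac operator $D = \sum_i e_i \cdot \nabla_{e_i}$ to a local orthonormal frame in \eqref{eq:constant-3D}. The shape-operator term contracts to $H \omega_2 \cdot \varphi$ via $\sum_i e_i \cdot S e_i = -\mathrm{tr}\, S$; the bulk Killing-type term gives $-\tau \varphi$ via $\sum_i e_i \cdot e_i = -2$; and the $\langle X, T\rangle$-terms combine, using $\lambda^2 + \|T\|^2 = 1$, into the form \eqref{eq:Dirac-3D-theorique}. Constancy of $|\varphi|$ is automatic since the right-hand side of \eqref{eq:constant-3D} is pointwise orthogonal to $\varphi$. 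The formula for $\mathrm{d}\lambda$ is obtained by rewriting the Daniel relation $X(\lambda) = -\langle SX - \tau JX, T\rangle$ and invoking the spinorial identity $ST = 2Q_{\varphi}(T) + B(T)$, which expresses the shape operator restricted to $T$ in terms of the energy-momentum tensor of $\varphi$ (the tensor $B$ absorbing the $\tau$-dependent correction specific to $\Nil(\tau)$).

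The main difficulty is $(3)\Rightarrow(1)$: building an immersion from a Dirac spinor alone. The plan is to localize and use the paper's key observation. Fix conformal coordinates $z = x + iy$ on $M$, trivialize the spin bundle by a lift of the orthonormal frame $e_1 = e^{-\rho/2}\partial_x$, $e_2 = e^{-\rho/2}\partial_y$, and write the components of $\varphi$ as $(\psi_1, \bar{\psi}_2)$, normalized so that $|\psi_1|^2 + |\psi_2|^2 = e^{\rho}$ (possible since $|\varphi|$ is constant). A direct translation shows that \eqref{eq:Dirac-3D-theorique} together with the prescription for $\mathrm{d}\lambda$ becomes the explicit equation \eqref{eq:Dirac-3D} with potential
\[
U = V = \frac{H e^{\rho}}{2} + \frac{i\tau}{4}\bigl(|\psi_2|^2 - |\psi_1|^2\bigr),
\]
which is exactly the integrability condition obtained in Section~2.1 for the one-form $f^{-1} \partial f / \partial z$ of \eqref{eq:Weierstrass-spin-group}. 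Integrating on the simply connected $M$ then yields the desired immersion $f : M \to \Nil(\tau)$. The hard part will be the final bookkeeping: verifying that the shape operator, mean curvature, and vertical data of the reconstructed $f$ match the prescribed $(S, T, \lambda)$. This is done by reading off $S$ from the energy-momentum tensor of $\varphi$ and identifying $T + \lambda \nu$ with $E_3$ through the explicit components $(\psi_1, \psi_2)$, consistency being ensured by the Daniel relations built into \eqref{eq:constant-3D}.
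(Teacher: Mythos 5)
Your $(1)\Rightarrow(2)$ and $(2)\Rightarrow(3)$ steps are fine and match what the paper does (or alludes to). The problem is the step $(3)\Rightarrow(1)$, and in particular the sentence claiming that ``a direct translation shows that \eqref{eq:Dirac-3D-theorique} together with the prescription for $\mathd\lambda$ becomes the explicit equation \eqref{eq:Dirac-3D}.'' That is not a direct translation: if you expand \eqref{eq:Dirac-3D-theorique} in the $\Sigma_2^+\oplus\Sigma_2^-$ splitting you get two equations for $v^+_{\bar z}$ and $v^-_z$ in which $T_1+iT_2$ and $T_1-iT_2$ still appear explicitly (these are precisely the two lines of \eqref{eq:constancy-v}), not the potential form \eqref{eq:DKTbis}. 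Reducing to \eqref{eq:DKTbis} requires the colinearity relations $(T_1-iT_2)v^- = (1+\lambda)v^+$ and $(T_1+iT_2)v^+ = (1-\lambda)v^-$, i.e.\ the statement that $(T+\lambda\nu)\cdot\varphi = i\varphi$ in the chosen gauge. Nothing in condition~(3) hands you that identity; you have to prove it, and that is exactly where the substance of the theorem lives.

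The paper is careful about this. Its new proof is of the implication $(2)\Rightarrow(1)$, not $(3)\Rightarrow(1)$: starting from the full covariant constancy equation \eqref{eq:constancy-v-quaternion}, Lemma~\ref{thm:existence-q} proves that $\bar v(T_1 i + T_2 j + \lambda k)v$ is a \emph{constant} imaginary quaternion $q$ (this is the abstract form of the colinearity, allowing $q\neq i$), and Lemma~\ref{thm:vanishing} then feeds into the integrability check for the Maurer--Cartan form $\alpha = e^\rho u v^{-1}(i\,\mathd x + j\,\mathd y)v u^{-1}$. Your proposal starts from the strictly weaker hypothesis~(3), which only controls $v^+_{\bar z}$ and $v^-_z$, not all of $\nabla\varphi$; so even the proof of Lemma~\ref{thm:existence-q} (which uses the full $V = v_x v^{-1}$) is unavailable to you. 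To make your route work you would first have to recover $\nabla\varphi$ from $D\varphi$, $Q_\varphi$ and the $\mathd\lambda$ equation --- i.e.\ prove $(3)\Rightarrow(2)$ --- which is precisely the hard step of the Friedrich/Roth argument you were trying to avoid. You also leave the final identification of $(S,T,\lambda)$ to the Daniel relations ``built into \eqref{eq:constant-3D}'', but \eqref{eq:constant-3D} is condition~(2), which you do not have on the $(3)\Rightarrow(1)$ leg of your cycle. As written, the argument has a genuine gap at the point where the abstract Dirac equation is supposed to collapse to \eqref{eq:Dirac-3D}.
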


The proof is technical and requires proving that the Gauss \& Codazzi
equations
\[ 
\begin{array}{cc}
K = \det A + \tau^2  ( 1 - 4 \lambda^2) \hspace{2em} 
&
\text{(Gauss)} 
\\
\nabla_X A Y - \nabla_Y A Y - A [ X, Y] = - 4 \tau^2 \lambda ( \langle Y, T
   \rangle X - \langle X, T \rangle Y) 
& ( \text{Codazzi}) 
\end{array}
\]
are satisfied (with the extra conditions \eqref{eq:Daniel} of Daniel on $T$ and
$\lambda$ for $\Nil(\tau)$).

\subsection{An explicit construction of induced spinors
}\label{sec:construction}

We will now construct explicitly the bundles involved, using quaternions for
shorter notations. Let us identify $\mathbb{R}^3$ with $\Im
\mathbb{H}= \text{Span} ( i, j, k)$ and the Clifford algebra with
\[ C \ell_3 \simeq \left\{ \left(\begin{array}{cc}
     a & b\\
     b & a
   \end{array}\right), a, b \in \mathbb{H} \right\} , \;
   \text{ where }
   \mathbb{R}^3 \simeq \left\{ \left(\begin{array}{cc}
     0 & b\\
     b & 0
   \end{array}\right), b \in \Im \mathbb{H} \right\} \, .
\]
The group $\text{Spin} (3)$ identifies with
\[
\text{Spin}(3) \simeq \left\{ \left(\begin{array}{cc}
     a & 0\\
     0 & a
   \end{array}\right), a \in \mathbb{S}^3 \subset \mathbb{H} \right\}
\] 
and acts on $\mathbb{R}^3$ via matrix multiplication, which amounts to
$\theta_3 (a) : x \mapsto a x a^{- 1} = ax \bar{a}$, for any $a \in \mathbb{S}^3$ 
and $x \in \Im \mathbb{H}$, yielding a twofold cover of $\text{SO} ( 3)$. 
From the above action on $\mathbb{H}^2$, we also infer an irreducible action $\chi_3$
of $\text{Spin} ( 3)$ on $\Sigma_3 =\mathbb{H}$, given by the left
multiplication by $a$, $L_a : q \mapsto a q$. The representation is the
restriction{\footnote{The full expression for $\chi_3$ is $\chi_3 \left(
\left(\begin{array}{cc}
  a & b\\
  b & a
\end{array}\right) \right) = L_{a + b}$.}} of $\chi_3 : C \ell_3 \rightarrow
\text{End} ( \mathbb{H})$ such that $\chi_3 ( x) = L_x$ for $x \in \Im
\mathbb{H}$ and $q \in \mathbb{H}$ (since imaginary quaternions have the
property that $x^2 = - x \bar{x} = - | x |^2$). Recall that the Clifford
multiplication of a spinor by a vector is simply $x \cdot \varphi = \chi_3 (
x) \varphi$. Finally, we identify $\Sigma_3$ with $\mathbb{C}^2$ by choosing
a compatible complex structure; in what follows we let $J = \mathbb{} R_i$,
the right multiplication by $i$ (so that the complex number $i$ will coincide
with the quaternion $i$).

The tangent space at identity $e \in \tilde{M}$ identifies with
$\mathbb{R}^3$ and the spinor frame at $e$ with $\text{Spin} ( 3) \simeq
\mathbb{S}^3$. The trivialisation of $\text{Spin} ( \mathbb{R}^3)$ as
$\mathbb{R}^3 \times \text{Spin} ( 3)$ holds in greater generality for three
dimensional groups $\tilde{M}$ via the group multiplication, where the tangent
space $T_x \tilde{M}$ is mapped canonically to $T_e \tilde{M}$ by left
multiplication by $x^{- 1}$. Equivalently, a choice of a spinor frame at
identity yields by left multiplication a spinor moving frame $\tilde{s}$ on
$\tilde{M}$, which trivializes $\text{Spin} ( \tilde{M})$. Such a spinor
moving frame will be called {\emph{constant}}, and is unique up to right
multiplication by $\text{Spin} ( 3)$.

A spinor field $\tilde{\varphi}$ being the (equivalence class of the) 
couple $(s, u)$ where $s$ is a moving spinor frame and $u$ a map
from $M$ to $\Sigma_3 =\mathbb{H}$, 
the equivalence relation now reads
\[ 
( s, u) \sim ( s a^{- 1}, \chi_3 ( a) u) = ( s a^{-
   1}, a u) 
\]
for all $a : \tilde{M} \rightarrow \text{Spin} ( 3) =\mathbb{S}^3 \subset \mathbb{H}$. 
Clifford multiplication is well-defined in the following way: since any moving spinor
frame $s$ induces by $\theta_3$ an orthonormal moving frame $( E_1,
E_2, E_3)$, and any tangent vector $X$ possesses coordinates $( X_1, X_2,
X_3)$ in this frame, we can identify the tangent bundle $T \tilde{M}$ with
$\text{Spin} ( \tilde{M}) \times_{\theta_3} \mathbb{R}^3$, with the
equivalence relation
\[ ( s, ( X_1, X_2, X_3)) \sim ( s a^{- 1}, \theta_3 ( a) (
   X_1, X_2, X_3)) \]
or, in quaternionic notations $x = X_1 i + X_2 j + X_3 k$: $( s, x)
\sim ( s a^{- 1}, \theta_3 ( a) x) = ( s a^{- 1}, a x a^{- 1})$.
The Clifford multiplication is then
\[ X \cdot \varphi = ( s, x) \cdot ( s, u) = ( s, x
   \cdot u) \]
where $x \cdot u$ is the multiplication in $C \ell_3$. It is immediate that $X
\cdot \varphi$ is defined independently of the frame $s$.

In the following, we pick a constant reference spinor frame $\tilde{s}$ as
above, using the group multiplication, and $\tilde{s}$ lies over the
orthonormal moving frame $( E_1, E_2, E_3)$. In $\Nil(\tau)$, we choose
$\tilde{s}$ such that $E_3$ lies on the vertical fiber. As a consequence, a
spinor field $\tilde{\varphi}$ identifies with a map $u : \tilde{M}
\rightarrow \mathbb{H}$. We will then say that $\tilde{\varphi}$ is
{\emph{constant}} if $u$ is, and note that the property does not depend on a
particular choice of $\tilde{s}$.
\\

Writing $\mathbb{R}^2$ as $\text{Span} ( i, j) \in \mathbb{H}$, the
two-dimensional Clifford algebra $C \ell_2$ identifies with $\mathbb{H}$, and
$\text{Spin} ( 2) \simeq U ( 1) = \{ e^{k \alpha} = (\cos \alpha) \, 1 + (\sin
\alpha) \, \omega_2 \}$, where $\omega_2 = i \cdot j = k$. The double cover
$\theta_2 : \text{Spin} ( 2) \rightarrow \text{SO} ( 2)$ is 
$e^{k \alpha} \mapsto \left( \begin{smallmatrix}
  \cos 2 \alpha & - \sin 2 \alpha\\
  \sin 2 \alpha & \cos 2 \alpha
\end{smallmatrix} \right)$. As representation on $\Sigma_2 =\mathbb{H}$, we choose
the universal extension of $\chi_2 : \mathbb{R}^2 \rightarrow \text{End} (
\mathbb{H})$, $\chi_2 ( x_1 i + x_2 j) = L_{x_1 i + x_2 j}$. In particular,
if $a = e^{k \alpha} \in \text{Spin} ( 2)$, $\chi_2 ( a)$ is again the left
multiplication $L_a$. Again, we endow $\Sigma_2$ with the complex structure $J
= R_i$. Finally, the {\emph{complex volume element}} $\omega_2^{\mathbb{C}} =
\sqrt{- 1} \, i \cdot j$, which squares to the identity, acts as $\chi_3 ( i
\cdot j) J = L_k R_i$ on $\Sigma_2$ and splits $\Sigma_2$ into two
eigenspaces $\Sigma_2^{\pm} = ( 1 \pm j) \mathbb{C}$ of {\emph{positive}} and
{\emph{negative}} spinors{\footnote{Note for later that this splitting is
actually hermitian orthogonal, so that $| v |^2 = | v^{+ 2} | + | v^-
|^2$.\label{norme-psi}}}. On a spin manifold $M$, this allow to view spinors
as square roots of forms. Indeed, let $s$ be a spinor frame at a point $z$,
corresponding to the oriented orthonormal frame $( e_1, e_2)$, and $\varphi =
( s, v)$ be a spinor, with $v = \frac{1 + j}{\sqrt{2}} v^+ + \frac{1 -
j}{\sqrt{2}} v^-$. Then the following forms are well defined :
\[ T_z M \ni X = X_1 e_1 + X_2 e_2 \mapsto ( v^+)^2  ( X_1 + i X_2) \]
as well as
\[ X \mapsto ( \overline{v^-})^2  ( X_1 + i X_2) \hspace{1em} \text{ and }
   \hspace{1em} X \mapsto ( v^+  \overline{v^-})  ( X_1 + i X_2) . \]
Indeed, the gauge change $s \rightarrow se^{- k \alpha}$ induces $v^+
\rightarrow v^+ e^{- i \alpha}$, $v^- \rightarrow v^- e^{+ i \alpha}$ and $X_1
+ i X_2 \rightarrow e^{2 i \alpha}  ( X_1 + i X_2)$ (because $e^{k \alpha}  ( 1
\pm j) = ( 1 \pm j) e^{- i \alpha}$). Obviously these 1-forms are of type $(
1, 0)$.
\\

Consider now an immersion $f : M \rightarrow \tilde{M}$. From the embedding of
$\mathbb{R}^2$ into $\mathbb{R}^3$ as the plane $\nu^{\bot}$, we deduce an
embedding of $\text{Spin} ( 2)$ into $\text{Spin} ( 3)$ such that for any $s
\in \text{Spin} ( 2) \subset \text{Spin} ( 3)$, $\theta_3 ( s)$ is the
rotation of axis $\nu$, whose restriction to the oriented plane $\nu^{\bot}$
is $\theta_2 ( s)$. This embedding carries over to the spinor frame bundles:
In the same way as a tangent frame $( e_1, e_2)$ to $M$ identifies with the
tangent frame $( e_1, e_2, \nu)$ of $\tilde{M}$, where $\nu$ is the unit
normal vector of $M$, a spinor frame $s \in \text{Spin} (M)$ identifies with
some spinor frame in $\text{Spin} ( \tilde{M})$, which, given a reference frame 
$\tilde{s}$, can be written as $\tilde{s} a^{- 1}$, for some $a \in \text{Spin}(3)$. 
Suppose furthermore that $M$ is parametrized by $f$ in conformal
coordinates and that $s ( z)$ lies over the frame $( e^{- \rho} \partial f /
\partial x, e^{- \rho} \partial f / \partial y)$. Moving all vectors to $e$ by
left multiplication by $f ( z)^{- 1}$, the identification $s ( z) \simeq
\tilde{s} ( f ( z)) a ( z)^{- 1}$ reads as{\footnote{In $\mathbb{R}^3$, the
group operation is just the vector sum, and one usually identifies (abusively)
$T\mathbb{R}^3$ with $T_0 \mathbb{R}^3 =\mathbb{R}^3$, so that $f^{- 1}
\mathd f$ is identified with $\mathd f$.}}
\begin{equation}
  \epsilon_1 = e^{- \rho} f^{- 1} \frac{\partial f}{\partial x} = a^{- 1} i a,
  \:\; \epsilon_2 = e^{- \rho} f^{- 1} \frac{\partial f}{\partial y} = a^{- 1}
  j a, \;\; \epsilon_3 = f^{- 1} \nu = a^{- 1} k a
  \label{eq:moving-frame}
\end{equation}
where $( i, j, k)$ stand for the frame at $e$ corresponding to $\tilde{s} (
e)$ (in quaternionic notations). Clearly $a$ is determined (up to sign) by
this property.
\\

The identification of $\Sigma \tilde{M}_{| M }$ with $\Sigma M$
used in {\textsection}\ref{sec:induced-spinors} goes as follows: Since, along~$M$, 
any ambient spinor $\tilde{\varphi} = ( \tilde{s}, u)$ can be written as
the couple $( \tilde{s} a^{- 1}, a u)$, and $\tilde{s} a^{- 1}$ is identified
with the intrinsic spinor frame $s$, $\tilde{\varphi}$ identifies with the
intrinsic spinor $\varphi = ( s, v)$ where $v = a u$. Of all possible choices,
this identification is the one coherent with our choice of Clifford multiplication
\eqref{eq:Clifford-induit}.

Summing up the above, we see that from a ambient spinor field
$\tilde{\varphi}$, one can construct $( 1, 0)$-forms along an immersed surface
$M$. We will now replicate the construction of the previous section in order
to obtain the representation formula. The principle is the following: Starting
with a constant ambient spinor $\tilde{\varphi} = ( \tilde{s}, u)$, the
induced spinor $\varphi = ( s, v) = ( \tilde{s} a^{- 1}, a u)$ determines the
spinor frame $a$, since $a = v u^{- 1}$ and $u$ is fixed. We then deduce $f^{-
1} \mathd f$ from $a$ via \eqref{eq:moving-frame}.
Given a unit quaternion $u$, and writing $v = \frac{1 + j}{\sqrt{2}} v^+ +
\frac{1 - j}{\sqrt{2}} v^-$,
\begin{eqnarray*}
  \epsilon_1 & = & \bar{a} i a\\
  & = & \frac{1}{2} u ( \overline{v^+}  ( 1 - j) + \overline{v^-}  ( 1 + j))
  i ( ( 1 + j) v^+ + ( 1 - j) v^-) u^{- 1}\\
  & = & u ( \overline{v^+} k v^+ - \overline{v^-} k v^- + \overline{v^+} iv^- +
  \overline{v^-} iv^+) u^{- 1}\\
  & = & u ( k ( ( v^+)^2 - ( v^-)^2) + i ( \overline{v^+} v^- +
  \overline{v^-} v^+)) u^{- 1}
\end{eqnarray*}
\begin{eqnarray*}
  \epsilon_2 & = & \bar{a} j a\\
  & = & \frac{1}{2} u ( \overline{v^+}  ( 1 - j) + \overline{v^-}  ( 1 + j))
  j ( ( 1 + j) v^+ + ( 1 - j) v^-) u^{- 1}\\
  & = & u ( \overline{v^+} j v^+ + \overline{v^-} j v^- + \overline{v^+} v^- -
  \overline{v^-} v^+) u^{- 1}\\
  & = & u ( j ( ( v^+)^2 + ( v^-)^2) + ( \overline{v^+} v^- - \overline{v^-}
  v^+)) u^{- 1}
\end{eqnarray*}
\begin{eqnarray*}
  \epsilon_3 & = & \bar{a} k a\\
  & = & \frac{1}{2} u ( \overline{v^+}  ( 1 - j) + \overline{v^-}  ( 1 + j))
  k ( ( 1 + j) v^+ + ( 1 - j) v^-) u^{- 1}\\
  & = & u ( - \overline{v^+} iv^+ + \overline{v^- } iv^- + \overline{v^+}
  k v^- + \overline{v^- } k v^+) u^{- 1}\\
  & = & u ( - i ( | v^+ |^2 - | v^- |^2) + 2 k ( v^+ v^-)) u^{- 1}
\end{eqnarray*}
Choose $u = ( 1 + i - j + k) / 2$, which satisfies $u k \bar{u} = - j, \; u j
\bar{u} = - i, \; u i \bar{u} = k$,
\[ \left\{ \begin{array}{lll}
     \epsilon_1 & = & - \Im ( ( v^+)^2 - ( v^-)^2) i - \Re ( (
     v^+)^2 - ( v^-)^2) j + 2 \Re ( \overline{v^-} v^+) k\\
     & = & - \Im ( ( \overline{v^-})^2 + ( v^+)^2) i + \Re ( (
     \overline{v^-})^2 - ( v^+)^2) j + 2 \Re ( \overline{v^-} v^+) k\\
     \epsilon_2 & = & - \Re ( ( v^+)^2 + ( v^-)^2) i + \Im ( (
     v^+)^2 + ( v^-)^2) j - 2 \Im ( \overline{v^-} v^+) k\\
     & = & - \Re ( ( \overline{v^-})^2 + ( v^+)^2) i - \Im ( (
     \overline{v^-})^2 - ( v^+)^2) j - 2 \Im ( \overline{v^-} v^+) k\\
     \epsilon_3 & = & - 2 \Im ( v^+ v^-) i - 2 \Re ( v^+ v^-) j -
     ( | v^+ |^2 - | v^- |^2) k
   \end{array} \right. \]
Defining $\psi_1 = e^{\rho / 2} v^+$ and $\psi_2 = e^{\rho / 2} v^-$ 
(so that $| \psi_1 |^2 + | \psi_2 |^2 = e^{\rho}$, see footnote~\ref{norme-psi}), 
i.e.
\[ v = e^{- \rho / 2}  \left( \frac{1 + j}{\sqrt{2}} \psi_1 + \frac{1 -
   j}{\sqrt{2}} \psi_2 \right) \]
we have recovered exactly the original representation 
formula~\eqref{eq:Weierstrass-spin-group} (and \eqref{eq:Weierstrass-spin} 
in~$\mathbb{R}^3$).

\begin{note}
  Rigorously speaking, we see that $\psi = ( \psi_1, \psi_2)$ is not a
  spinor, because of the metric factor. But $e^{- \rho / 2} \psi$ is. Hence,
  the integrability equation \eqref{eq:Dirac-3D} reads in terms of $v$ as
  \begin{equation}
    \left\{ \begin{array}{l}
      v^+_{\bar{z}} = - \frac{\rho_{\bar{z}}}{2} v^+ + U v^-\\
      v^-_z = - \frac{\rho_z}{2} v^- - U v^+
    \end{array} \right.  \label{eq:DKTbis}
  \end{equation}
\end{note}

\begin{note}
  A different choice of $u$ would yield a different parametrization of the
  complex curve, or more precisely, a rotation of the formula
  \eqref{eq:Weierstrass-spin-group}. Our choice of $u$ is the one
  corresponding to the classical formula. This particular choice will play a
  role in the next section.
\end{note}

We have thus seen concretely how an intrinsic spinor field induced by a
constant ambient spinor is sufficient to reconstruct entirely the immersion (a
fact already noted by the previous authors, although without the link to the
representation formula). Conversely, given $\psi = ( \psi_1, \psi_2)$, then
formula \eqref{eq:Weierstrass-spin-group} defines locally an immersion of $M$
into $\tilde{M}$, provided the integrability condition \eqref{eq:Dirac-3D}
holds. Let $s = \tilde{s} a^{- 1}$ be the moving spinor frame along $M$
associated with the conformal coordinate $z$ ($s$ is unique up to sign), and
set $\varphi = ( s, v)$ with $v = e^{- \rho/2} \psi$. Then along $M$, the
ambient spinor $\tilde{\varphi} = ( \tilde{s}, a^{- 1} v)$ is constant.
Indeed, the translated moving frame $( \epsilon_1, \epsilon_2, \epsilon_3)$ is
$( \bar{a} i a, \bar{a} j a, \bar{a} k a)$ as well as $( u \bar{v} iv \bar{u}, u
\bar{v} j v \bar{u} , u \bar{v} k v \bar{u})$, by definition of the
representation formula. This implies $a = \pm v u^{- 1}$, so $a^{- 1} v = \pm
u$ is constant. 

It is worth noting that we used \eqref{eq:Dirac-3D} as an integrability 
condition and not as a reformulation of $\nabla \tilde{\varphi} = 0$.
Rather, the spinor representation formula \eqref{eq:Weierstrass-spin-group}
{\emph{per se}} implies that the corresponding ambient spinor is constant,
\emph{provided the immersion exists}, namely that \eqref{eq:Dirac-3D} holds. 
In other words \eqref{eq:Dirac-3D} implies \eqref{eq:constant-3D}. 
There remains to show the converse.


\subsection{Dirac: from abstract to explicit }

Let us write the constancy equation \eqref{eq:constant-3D} in terms of
quaternions. In the local coordinate $z$, with associate spinor frame $s$, the
spinor field $\varphi = ( s, v)$ identifies with $v$ and we write
\[ \begin{split} 
   \nabla_X \varphi & =  ( s, Xv) + \frac{1}{2}  \langle \nabla_X \epsilon_1,
   \epsilon_2 \rangle \omega_2 \cdot \varphi 
   = \left( s, Xv + \frac{1}{2} \langle \nabla_X \epsilon_1, \epsilon_2 \rangle 
   \chi_2 ( \omega_2) v \right) 
   \\
   & = \left( s, Xv + \frac{1}{2} \langle \nabla_X \epsilon_1,
   \epsilon_2 \rangle k v \right) \, .
\end{split} \]
From now on, we will abuse notations and write $\nabla_X v$ instead, dropping
the spinor frame part $s$ when there is no ambiguity. Using the connection
coefficients,
\[
\renewcommand*{\arraystretch}{1.2} 
 \begin{array}{rcl}
\nabla_X v & = & e^{- \rho}  ( X_1 v_x + X_2 v_y) + \frac{e^{- \rho}}{2}  (
   \rho_x X_2 - \rho_y X_1) k v
\\
 - \frac{1}{2}  ( SX) \cdot \omega_2 \cdot v & = & \frac{1}{2}  ( ( h_{11} X_1 +
   h_{12} X_2) j - ( h_{21} X_1 + h_{22} X_2) i) v
\\
 X \cdot v  & = &  \chi_2 ( X) v = ( X_1 i + X_2 j) v
\\
  T \cdot v + \lambda \omega_2 \cdot v  & = &  ( T_1 i + T_2 j) v + \lambda k v
\end{array} 
\]
Finally
\begin{eqnarray*}
  X_1 v_x + X_2 v_y & = & - \frac{1}{2}  ( \rho_x X_2 - \rho_y X_1) k v 
  \\
  && + \frac{e^{\rho}}{2}  ( ( h_{11} X_1 + h_{12} X_2) j - ( h_{21} X_1 + h_{22}
  X_2) i) v
  \\
  &  & + \frac{\tau e^{\rho}}{2}  ( X_1 i + X_2 j) v - \tau e^{\rho}  \langle
  X, T \rangle  ( ( T_1 i + T_2 j) v + \lambda k v) \, .
\end{eqnarray*}
Specializing to $\partial / \partial x$ and $\partial / \partial y$
\begin{equation}     \tag{C'}
  \left\{ \begin{array}{l}
    v_x =  \left( \frac{\rho_y}{2} k + e^{\rho}  \frac{h_{11}}{2} j -
    e^{\rho}  \frac{h_{21}}{2} i + \frac{\tau e^{\rho}}{2} i - \tau e^{\rho}
    T_1  ( T_1 i + T_2 j + \lambda k) \right) v\\
    v_y = \left( - \frac{\rho_x}{2} k + e^{\rho}  \frac{h_{12}}{2} j -
    e^{\rho}  \frac{h_{22}}{2} i + \frac{\tau e^{\rho}}{2} j - \tau e^{\rho}
    T_2  ( T_1 i + T_2 j + \lambda k) \right) v
  \end{array} \right. \label{eq:constancy-v-quaternion}
\end{equation}

\begin{note}
Since $v_x v^{-1}$ and $v_y v^{-1}$ both lie in $\Im \mathbb{H}$, $|v|$ is clearly constant.
From now on, any solution $v$ of \eqref{eq:constancy-v-quaternion} will therefore be normalized 
to norm $1$.
\end{note}
\medskip
Let us now write $v = ( v^+, v^-)$ in the $\Sigma_2^+ \oplus \Sigma_2^-$
splitting, so that left quaternionic multiplication (i.e. Clifford
multiplication) acts as a skew-hermitian matrix, e.g.
\[ ( T_1 i + T_2 j + \lambda k) v = i \left(\begin{array}{c}
     ( T_1 - i T_2) v^-\\
     ( T_1 + i T_2) v^+
   \end{array}\right) - \lambda i \left(\begin{array}{c}
     v^+\\
     - v^-
   \end{array}\right) . \]
Then
\begin{equation}
  \left\{ \begin{array}{lll}
    v^+_{\bar{z}} & = - \frac{\rho_{\bar{z}}}{2} v^+ + \frac{e^{\rho}}{2}  ( H
    + i \tau \lambda^2) v^- + \frac{i \tau \lambda e^{\rho}}{2}  ( T_1 + i T_2)
    v^+ & \\
    v^-_z & = - \frac{\rho_z}{2} v^- + \frac{e^{\rho}}{2}  ( - H + i \tau
    \lambda^2) v^+ - \frac{i \tau \lambda e^{\rho}}{2}  ( T_1 - i T_2) v^- & 
  \end{array} \right. \label{eq:constancy-v}
\end{equation}
which differs quite a bit from \eqref{eq:DKTbis}, unless $\tau = 0$
(immersions into $\mathbb{R}^3$). That is not so surprising, since the
abstract equation \eqref{eq:constant-3D} states the existence of an immersion
for which $\varphi$ is induced from {\emph{any}} constant spinor, while
Konopelchenko \& Taimanov's equation \eqref{eq:Dirac-3D} implies the same
result with $\varphi$ induced from the {\emph{specific}} constant spinor field
$( s, u_0)$, namely $u_0 = ( 1 + i - j + k) / 2$. Indeed, because $k u_0 = u_0
i$, we know that the ambient spinor $\tilde{\varphi}$ satisfies
\[ E_3 \cdot \tilde{\varphi} = ( \tilde{s}, k) \cdot ( \tilde{s}, u_0) = (
   \tilde{s}, k u_0) = ( \tilde{s}, u_0 i) \]
so for the induced spinor
\[ ( E_3 \cdot \tilde{\varphi})_{| M } = E_3 \cdot \varphi = ( T +
   \lambda \omega_2) \cdot ( s, v) = ( s, \chi_2 ( T + \lambda \omega_2) v) =
   ( s, vi), \]
in vector notation,
\[ i \left(\begin{array}{c}
     ( T_1 - i T_2) v^-\\
     ( T_1 + i T_2) v^+
   \end{array}\right) - \lambda i \left(\begin{array}{c}
     v^+\\
     - v^-
   \end{array}\right) = i \left(\begin{array}{c}
     v^+\\
     v^-
   \end{array}\right) \]
or the two (equivalent) colinearity relations
\[ \left\{ \begin{array}{l}
     ( T_1 - i T_2) v^- = ( 1 + \lambda) v^+\\
     ( T_1 + i T_2) v^+ = ( 1 - \lambda) v^- .
   \end{array} \right. \]
Substituting in \eqref{eq:constancy-v}, we obtain
\[ \left\{ \begin{array}{ll}
     v^+_{\bar{z}} & = - \frac{\rho_{\bar{z}}}{2} v^+ + \frac{e^{\rho}}{2}  (
     H + i \tau \lambda) v^-\\
     v^-_z & = - \frac{\rho_z}{2} v^- - \frac{e^{\rho}}{2}  ( H + i \tau
     \lambda) v^+
   \end{array} \right. \]
exactly the equation \eqref{eq:DKTbis}. In other words, \eqref{eq:constant-3D}
is more general, while \eqref{eq:Dirac-3D} is simpler because it assumes
implicitly a given choice of spinor coordinates.

We give now another proof of theorem \ref{thm:FR}, showing that equation
\eqref{eq:constancy-v-quaternion} implies the existence of an immersion, for which
$\varphi$ is the restriction of a constant ambient spinor field
$\tilde{\varphi} = ( \tilde{s}, u)$. We do so without going through the full
construction of~\cite{R}.
\\

The key idea is as follows: if $\varphi = ( s, v)$ is the restriction of some
constant spinor field $\tilde{\varphi} = ( \tilde{s}, u)$, it satisfies the
relation
\[ E_3 \cdot \tilde{\varphi} = ( \tilde{s}, k) \cdot ( \tilde{s}, u) = (
   \tilde{s}, k u) = ( \tilde{s}, u q) \]
where $q = u^{- 1} k u$. We need the following lemmas, that prepares for the
construction of the constants $q$ and $u$.


\begin{lemma}
  \label{thm:existence-q}For any solution $v$ of
  \eqref{eq:constancy-v-quaternion}, where $T, \lambda$ follow Daniel's
  equations~\eqref{eq:Daniel}, there exists an imaginary constant $q$ such
  that
  \begin{equation}
    ( T_1 i + T_2 j + \lambda k) v = v q . \label{eq:commutation}
  \end{equation}
\end{lemma}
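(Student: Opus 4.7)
The natural candidate is $q := \bar v\,\xi\, v$, where $\xi := T_1 i + T_2 j + \lambda k$. First I observe (as noted just after \eqref{eq:constancy-v-quaternion}) that any solution $v$ of \eqref{eq:constancy-v-quaternion} has constant norm, so after normalizing $|v|=1$ one has $v^{-1}=\bar v$. Then $q$ is purely imaginary, being the conjugate of an imaginary quaternion, with $|q|^2 = T_1^2+T_2^2+\lambda^2 = 1$. It therefore suffices to show that $q$ is locally constant in $z$, i.e.\ $\partial_x q = \partial_y q = 0$; equation \eqref{eq:commutation} then follows by multiplying the definition of $q$ on the left by $v$.

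Write $v_x = A_x v$ and $v_y = A_y v$ with $A_x, A_y \in \mathbb{H}$ the factors appearing in \eqref{eq:constancy-v-quaternion}. Differentiating $q = \bar v\,\xi\, v$ and using $(\bar v)_x = -\bar v\,A_x$ (since $|v|=1$ implies $\overline{A_x}=-A_x$), one gets
\[
\partial_x q \;=\; \bar v\bigl(\xi_x + [\xi,A_x]\bigr) v,
\]
so $\partial_x q = 0$ is equivalent to the quaternion identity $\xi_x = [A_x,\xi]$ and similarly for $y$.

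Expanding the commutator is straightforward: the last summand of $A_x$ is a scalar multiple of $\xi$ and commutes with $\xi$, so only the $i$-, $j$-, $k$-components of $A_x$ contribute. Using $[i,j]=2k$, $[j,k]=2i$, $[k,i]=2j$, one reads off the $i$-, $j$-, $k$-coefficients of $[A_x,\xi]$ as explicit linear combinations of $T_1, T_2, \lambda$ and of the coefficients $\rho_y, h_{11}, h_{21}, \tau$ appearing in $A_x$. Matching these with $(T_1)_x$, $(T_2)_x$, $\lambda_x$ yields precisely Daniel's equations \eqref{eq:Daniel} specialized to $X = \partial/\partial x$; the analogous computation for $A_y$ gives \eqref{eq:Daniel} for $X = \partial/\partial y$. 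Under the hypothesis that $(T,\lambda)$ satisfy \eqref{eq:Daniel}, both identities hold, hence $\partial_x q = \partial_y q = 0$, and $q$ is constant.

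The argument is essentially bookkeeping in $\mathbb H$; the main point (and the only place where real content enters) is that Daniel's equations were tailored exactly to encode the compatibility $\xi_x = [A_x,\xi]$, $\xi_y=[A_y,\xi]$. This works uniformly for the euclidean case ($\tau=0$) and for $\Nil(\tau)$.
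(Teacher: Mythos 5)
Your proposal is correct and follows essentially the same route as the paper: define $q := \bar v\,\xi\,v$ with $\xi = T_1 i + T_2 j + \lambda k$, use the constancy equation to reduce $\partial_x q = 0$ to the quaternionic identity $\xi_x = [A_x,\xi]$ (the paper writes this as $\bar V E + E_x + EV = 0$, which is the same thing since $\bar V = -V$), and observe that this identity is exactly the coordinate form of Daniel's equations. Your remark that the $\xi$-proportional term of $A_x$ drops out of the commutator is a useful simplification of the ``long but straightforward'' verification that the paper leaves implicit, but the underlying argument is identical.
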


\begin{proof} The first equation in \eqref{eq:Daniel} is written as:
  \begin{eqnarray*}
    \nabla_X T 
    & = & e^{- \rho}  \big( ( X_1 T_{1 x} + X_2 T_{1 y} + T_2  ( X_1 \rho_y - X_2
    \rho_x)) e_1 
    \\
    && \hspace*{3em} + ( X_1 T_{2 x} + X_2 T_{2 y} + T_1  ( - X_1 \rho_y + X_2
    \rho_x)) e_2 \big)
    \\
    & = & \lambda ( SX - \tau JX)
     =  \lambda \left( \left(\begin{array}{c}
      h_{11} X_1 + h_{12} X_2\\
      h_{21} X_1 + h_{22} X_2
    \end{array}\right) - \tau \left(\begin{array}{c}
      - X_2\\
      X_1
    \end{array}\right) \right)
  \end{eqnarray*}
  specializing
  \[ \left\{ \begin{array}{l}
       T_{1 x} = \lambda e^{\rho} h_{11} - T_2 \rho_y\\
       T_{1 y} = \lambda e^{\rho}  ( h_{12} + \tau) + T_2 \rho_x\\
       T_{2 x} = \lambda e^{\rho}  ( h_{21} - \tau) + T_1 \rho_y\\
       T_{2 y} = \lambda e^{\rho} h_{22} - T_1 \rho_x
     \end{array} \right. \]
 The second equation in \eqref{eq:Daniel} reads
  \[ \left\{ \begin{array}{l}
       \lambda_x = - e^{\rho}  \langle \left(\begin{array}{c}
         h_{11}\\
         h_{21}
       \end{array}\right) - \tau \left(\begin{array}{c}
         0\\
         1
       \end{array}\right), \left(\begin{array}{c}
         T_1\\
         T_2
       \end{array}\right) \rangle = - e^{\rho}  ( T_1 h_{11} + T_2  ( h_{21} -
       \tau))\\
       \lambda_y = - e^{\rho}  \langle \left(\begin{array}{c}
         h_{12}\\
         h_{22}
       \end{array}\right) - \tau \left(\begin{array}{c}
         - 1\\
         0
       \end{array}\right), \left(\begin{array}{c}
         T_1\\
         T_2
       \end{array}\right) \rangle = - e^{\rho}  ( T_1  ( h_{12} + \tau) + T_2
       h_{22})
     \end{array} \right. \]
  Set $E = T_1 i + T_2 j + \lambda k$,
  \[
  \frac{\partial E}{\partial x} 
    = e^{\rho}  ( h_{11}  ( \lambda i - T_1 k) + ( h_{21} - \tau) (
    \lambda j - T_2 k)) + \rho_y  ( T_1 j - T_2 i)
  \]
Recall from \eqref{eq:constancy-v-quaternion} that $v_x = V v$ for some imaginary quaternion
valued function~$V$; then 
\[
\frac{\partial \bar{v} Ev}{\partial x} = \bar{v} \left( \bar{V} E + \frac{\partial E}{\partial x}
+ E V \right) v
\]
It is long but straightforward to show that $\bar{V} E + \frac{\partial E}{\partial x} + E V$
vanishes identically.
  The same computations holds for the $y$ variable. Clearly $q \in \Im
  \mathbb{H}$. \ 
\end{proof}

\begin{note}
  The equation \eqref{eq:constancy-v-quaternion} could accordingly be
  simplified using \eqref{eq:commutation}, eliminating $T$ in the process.
  However, because right quaternionic multiplication does not commute with the
  complex structure, the new equation may not be as simple as~\eqref{eq:DKTbis}, 
  for which $q = i$.
\end{note}
\medskip

The mere existence of $u$ implies the following relation for solutions of
\eqref{eq:constancy-v-quaternion}:

\begin{lemma}
  \label{thm:vanishing}For any solution $v$ of
  \eqref{eq:constancy-v-quaternion} assuming \eqref{eq:Daniel}
  \[ \lambda \bar{v}  ( T_1 i + T_2 j + \lambda k) v = \langle \bar{v} k v, q
     \rangle q . \]
\end{lemma}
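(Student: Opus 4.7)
The plan is to reduce the statement to a two-line algebraic computation by invoking Lemma~\ref{thm:existence-q}. Writing $E := T_1 i + T_2 j + \lambda k$, that lemma provides a constant purely imaginary $q$ with $Ev = vq$. Combined with the unit-norm normalization of $v$ (noted right after \eqref{eq:constancy-v-quaternion}), left multiplication by $\bar v$ yields
\[
\bar v \, E \, v \;=\; \bar v v \, q \;=\; q,
\]
so the left-hand side of the lemma equals simply $\lambda q$. It therefore suffices to prove the scalar identity $\langle \bar v k v,\, q\rangle = \lambda$.

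For this, the key observation is that for unit $v\in\mathbb{H}$ the map $x\mapsto \bar v x v$ is a Euclidean isometry of $\mathbb{H}$ for the standard scalar product $\langle p_1, p_2\rangle = \operatorname{Re}(\bar{p_1} p_2)$; indeed, restricted to $\Im\mathbb{H}$ it is precisely $\theta_3(v^{-1})\in \mathrm{SO}(3)$. Using this isometry together with $q = \bar v E v$:
\[
\langle \bar v k v,\, q\rangle \;=\; \langle \bar v k v,\, \bar v E v\rangle \;=\; \langle k,\, E\rangle \;=\; \langle k,\, T_1 i + T_2 j + \lambda k\rangle \;=\; \lambda,
\]
whence $\langle \bar v k v,\, q\rangle\, q = \lambda q = \lambda \bar v E v$, as claimed.

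There is no real obstacle: the whole content of the lemma is the substitution $\bar v E v = q$ (from Lemma~\ref{thm:existence-q}), followed by orthogonality of $i,j,k$ transported via an isometry. All of the analytic work—the use of Daniel's equations \eqref{eq:Daniel} and the PDE \eqref{eq:constancy-v-quaternion}—was absorbed into the proof of Lemma~\ref{thm:existence-q} and need not be redone here.
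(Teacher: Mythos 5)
Your proof is correct and follows essentially the same route as the paper's: both substitute $\bar v E v = q$ from Lemma~\ref{thm:existence-q} (using $|v|=1$), then compute $\langle \bar v k v, q\rangle = \langle k, E\rangle = \lambda$ via the isometry $x\mapsto \bar v x v$. Your write-up just makes the invocation of Lemma~\ref{thm:existence-q} and the isometry argument a bit more explicit than the paper does.
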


\begin{proof}
  \[ \langle \bar{v} k v, q \rangle = \langle v^{- 1} k v, v^{- 1}  ( T_1 i +
     T_2 j + \lambda k) v \rangle = \langle k, T_1 i + T_2 j + \lambda k
     \rangle = \lambda \]
  so
  \[ \langle \bar{v} k v, q \rangle q - \lambda \bar{v}  ( T_1 i + T_2 j +
     \lambda k) v = \lambda q - \lambda | v |^2 q = 0 \, .
     \]
\end{proof}

We conclude the proof of the theorem by giving an immersion via its
Maurer--Cartan form. Using the notations \eqref{eq:moving-frame} of section
\ref{sec:construction}, we need to prove that the $\Im
\mathbb{H}$-valued 1-form
\[ 
\alpha = e^{\rho} u v^{- 1}  ( i \mathd x + j \mathd y) v u^{- 1} 
\]
is the Maurer--Cartan form $f^{- 1} \mathd f$ of some immersion into
$\Nil(\tau)$, where $u$ is a unit quaternion such that $u^{- 1} k u = q$,
obtained through lemma \ref{thm:existence-q} ($u$ is not unique). The
necessary and sufficient condition for local existence is again the
integrability equation:
\begin{eqnarray*}
  0 & = & - \frac{\partial}{\partial y}  ( e^{\rho} u v^{- 1} i v u^{- 1}) +
  \frac{\partial}{\partial x}  ( e^{\rho} u v^{- 1} j v u^{- 1}) + [ e^{\rho}
  u v^{- 1} i v u^{- 1}, e^{\rho} u v^{- 1} j v u^{- 1}]\\
  & = & e^{\rho} u \big( \rho_x  \bar{v} j v + \bar{v}_x j v + \bar{v} j v_x -
  \rho_y  \bar{v} iv - \bar{v}_y iv - \bar{v} iv_y 
  \\
  && \hspace*{12em} + 2 \tau e^{\rho} \langle
  \bar{v} k v, u^{- 1} k u \rangle u^{- 1} k u \big) u^{- 1}.
\end{eqnarray*}
(We have used the relation of the Lie bracket in $\mathfrak{nil}_3(\tau) \simeq \Im
\mathbb{H}$ with the quaternionic multiplication and scalar product: $[ q,
q'] = 2 \tau \langle q q', k \rangle k$.) Factorizing and substituting from
\eqref{eq:constancy-v-quaternion}, and using $T_1^2 + T_2^2 + \lambda^2=1$,
\begin{eqnarray*}
  0 & = & \rho_x  \bar{v} j v + \bar{v} \left( - \frac{\rho_y}{2} k - e^{\rho} 
  \frac{h_{11}}{2} j + e^{\rho}  \frac{h_{21}}{2} i - \frac{\tau e^{\rho}}{2}
  i + \tau e^{\rho} T_1  ( T_1 i + T_2 j + \lambda k) \right) j v 
  \\
  && + \bar{v} j \left( \frac{\rho_y}{2} k + e^{\rho}  \frac{h_{11}}{2} j - e^{\rho} 
  \frac{h_{21}}{2} i + \frac{\tau e^{\rho}}{2} i - \tau e^{\rho} T_1  ( T_1 i
  + T_2 j + \lambda k) \right) v
  \\
  &  & - \rho_y  \bar{v} iv - \bar{v}  \left( \frac{\rho_x}{2} k - e^{\rho} 
  \frac{h_{12}}{2} j + e^{\rho}  \frac{h_{22}}{2} i - \frac{\tau e^{\rho}}{2}
  j + \tau e^{\rho} T_2  ( T_1 i + T_2 j + \lambda k) \right) iv 
  \\
  && - \bar{v} i
  \left( - \frac{\rho_x}{2} k + e^{\rho}  \frac{h_{12}}{2} j - e^{\rho} 
  \frac{h_{22}}{2} i + \frac{\tau e^{\rho}}{2} j - \tau e^{\rho} T_2  ( T_1 i
  + T_2 j + \lambda k) \right) v
  \\
  &  & + 2 \tau e^{\rho} \langle \bar{v} k v, u^{- 1} k u \rangle u^{- 1} k u
  \\
  & = & 2 \tau e^{\rho}  ( - \lambda \bar{v}  ( T_1 i + T_2 j + \lambda k) v
  + \langle \bar{v} k v, u^{- 1} k u \rangle u^{- 1} k u)
\end{eqnarray*}
which vanishes according to lemma \ref{thm:vanishing}. This implies the
local existence of an immersion in $\Nil(\tau)$ (including the special case of
$\mathbb{R}^3$ when $\tau = 0$), for which $\varphi$ is the restriction of an
ambient spinor.
\begin{note}
A ``colinearity'' relation similar to \eqref{eq:commutation} appears naturally 
when using the $spin^c$ formalism in $\Nil$
\[
\xi \cdot \varphi = (T + \lambda \nu) \cdot \varphi = i \varphi
\]
(see \cite{NR} for details). 
\end{note}

\section{Conformally parametrized surfaces in $\mathbb{R}^4$}

Recently, Bayard, Lawn \& Roth \cite{BLR} have characterized isometric immersions
of surfaces in $\mathbb{R}^4$ by means of spinor fields, giving an spinorial
analog of the Gauss--Ricci--Codazzi equations. More precisely, they show the
following theorem:

\begin{theorem}
  [Bayard, Lawn \& Roth]
  \label{thm:BLR}
  Given a Killing spinor $\tilde{\varphi}$ with Killing constant $\lambda$ 
  in $\mathbb{M}^4(4 \lambda^2)$ (the real space form of curvature $4 \lambda^2$) 
   and an immersed
  surface $M^2$, its restriction $\varphi$ to $M$ lies in the twisted spinor
  bundle $\Sigma = \Sigma M \otimes \Sigma E$ ($E$ being the normal bundle)
  and satisfies the Dirac equation
  \begin{equation}
    D \varphi = \vec{H} \cdot \varphi - 2 \lambda \varphi
    \label{eq:dirac-BLR}
  \end{equation}
  for the Dirac operator $D$ along $M$. Conversely, if $( M^2, g)$ is an
  oriented riemannian manifold, with given spin structure and rank 2 oriented
  spin vector bundle $E$ on $M$, then the following properties are equivalent,
  where $\varphi$ is a section of the twisted spinor bundle $\Sigma = \Sigma M
  \otimes \Sigma E$:
  \begin{enumerate}
    \item $\varphi$ is a solution of the Dirac equation \eqref{eq:dirac-BLR},
    such that $\varphi^+, \varphi^-$ do not vanish and
    \[ X | \varphi^+ | = 2 \Re \langle \lambda X \cdot \varphi^-,
       \varphi^+ \rangle \hspace{1em} \text{and} \hspace{1em} X | \varphi^- |
       = 2 \Re \langle \lambda X \cdot \varphi^+, \varphi^- \rangle, \]
    \item $\nabla_X \varphi = \lambda X \cdot \varphi - \frac{1}{2}  \sum_1^2
    e_j \cdot \ff ( X, e_j) \cdot \varphi$ for some symmetric bilinear $\ff$
    section of $E$, $\varphi^+, \varphi^-$ do not vanish,
    
    \item there exists a local immersion of $M$ with mean curvature $\vec{H} =
    \frac{1}{2} \textup{tr} \ff$, second fundamental form $\ff$, normal bundle $E$
    into $\mathbb{M}^4 ( 4 \lambda^2)$.
  \end{enumerate}
\end{theorem}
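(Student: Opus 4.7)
The plan is to prove Theorem \ref{thm:BLR} by replicating the strategy used above for Theorem \ref{thm:FR}, namely: decode the abstract spinorial condition into a Konopelchenko--Taimanov-style system \eqref{eq:dirac-4D-KT}, then assemble the immersion via the explicit representation formula \eqref{eq:weierstrass-KT}. This will bypass the full Gauss--Codazzi--Ricci machinery.

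First, for the direct statement (restriction of a Killing spinor satisfies \eqref{eq:dirac-BLR}) and the implication $(3)\Rightarrow(2)$, I would proceed as in \S\ref{sec:induced-spinors}: pick an identification $\Sigma\tilde{M}_{|M}\simeq\Sigma M\otimes\Sigma E$ compatible with the Clifford actions (the twisted bundle appears because the normal bundle now has rank~$2$), and compute $\nabla_X\varphi$ via the spinorial Gauss formula. The Killing condition $\tilde{\nabla}_X\tilde{\varphi}=\lambda X\cdot\tilde{\varphi}$ produces the $\lambda X\cdot\varphi$ term, and the second fundamental form enters through the usual $-\tfrac12\sum_j e_j\cdot\ff(X,e_j)\cdot\varphi$ piece. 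Contracting with $e_1,e_2$ yields \eqref{eq:dirac-BLR}, and the norm conditions on $\varphi^\pm$ in $(1)$ follow by differentiating $|\varphi^\pm|^2$ using $(2)$ and the fact that the shape-operator term is skew-hermitian on each chiral piece while the Killing term mixes chiralities.

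For the equivalence $(1)\Leftrightarrow(2)$ (the Dirac-with-norms formulation versus the pointwise connection formula), I would proceed as for \eqref{eq:constancy-v-quaternion}$\Rightarrow$\eqref{eq:DKTbis}. Given $(1)$, define $\ff$ by a suitable quadratic-in-$\varphi$ expression (an energy-momentum type tensor built from $\varphi^\pm$, analogous to $Q_\varphi$ in Theorem~\ref{thm:FR}), check it is symmetric thanks to the norm conditions, and verify that $\nabla_X\varphi-\lambda X\cdot\varphi+\tfrac12\sum_j e_j\cdot\ff(X,e_j)\cdot\varphi$ vanishes. The converse $(2)\Rightarrow(1)$ is the contraction computed above.

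The genuinely hard step is the converse $(2)\Rightarrow(3)$: constructing the immersion. Here I would mimic \S\ref{sec:construction}: trivialize $\Sigma M\otimes\Sigma E$ over a conformal chart $z=x+iy$, write $\varphi$ in a quaternionic-style basis so that its chiral components split as pairs $(s_1,s_2)$ and $(t_1,t_2)$ mirroring the KT data, and show that the connection equation $(2)$ reduces (after a suitable gauge normalization fixing $u$-type constants as in lemma \ref{thm:existence-q}) exactly to the Dirac system \eqref{eq:dirac-4D-KT} with potential $h$ determined by $\ff$. Once this is done, define the candidate immersion by the formula \eqref{eq:weierstrass-KT} and verify that it is conformal with the prescribed induced metric, mean curvature $\tfrac12\operatorname{tr}\ff$, second fundamental form $\ff$ and normal bundle $E$. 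The integrability (closedness) of the integrand is the analogue of the computation at the end of \S\ref{sec:construction} and will again reduce, via a KT-style relation on $(s_1,s_2,t_1,t_2)$, to a purely algebraic identity once $(2)$ is assumed; in the space-form case $\lambda\neq 0$ one must additionally use the Killing constant to account for the ambient curvature, which, like the $\tau$-terms above, contributes lower-order algebraic pieces that cancel against the $\ff$-terms. This last cancellation, together with the $\lambda\neq 0$ bookkeeping that replaces integration in $\mathbb{R}^4$ by working inside $\mathbb{M}^4(4\lambda^2)$, is where I expect the main technical difficulty to lie.
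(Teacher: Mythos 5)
Your overall plan — decode \eqref{eq:dirac-BLR} into the Konopelchenko--Taimanov system, then use the explicit representation formula \eqref{eq:weierstrass-KT} to build the immersion — is exactly the philosophy of the paper's proof, and the direct part (spinorial Gauss formula, Clifford contraction) is standard and matches. However, there is a concrete gap in your treatment of $(2)\Rightarrow(3)$: you say the connection equation reduces to \eqref{eq:dirac-4D-KT} ``after a suitable gauge normalization fixing $u$-type constants as in lemma~\ref{thm:existence-q}.'' That analogy is too weak in four dimensions. In the 3D case the gauge is a single constant unit quaternion $u$ obtained algebraically; in the 4D case the passage from the intrinsic system \eqref{eq:dirac-sys2} (with constant-norm $a,b$) to the KT data $(A,B)$ requires a \emph{non-constant, holomorphic-type} gauge transformation: one writes $A=e^{u+iv+\rho/2}a$, $B=e^{-u+iv+\rho/2}b$ and must choose $(u,v)$ to kill the normal-connection terms, which leads to a $\bar\partial$ problem $\partial(u-iv)/\partial\bar z=(c_y-ic_x)/4$. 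The existence of a local solution (via H\"ormander's theorem) is the actual technical step the paper relies on; without identifying it, the reduction to \eqref{eq:dirac-4D-KT} does not go through, and closedness of $\xi=e^\rho a^{-1}\,\mathd z\,b$ is not established.

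A second point: the paper's converse argument is carried out only in $\mathbb{R}^4$ ($\lambda=0$, parallel spinor), using precisely the translational integrability of $\mathd f=\bar A\,\mathd z\,B$. Your closing remark that the $\lambda\neq 0$ case ``contributes lower-order algebraic pieces that cancel against the $\ff$-terms'' is speculative and goes beyond what the paper proves; in $\mathbb{M}^4(4\lambda^2)$ there is no quaternionic translation trick, so the integration step itself would need to be replaced (e.g.\ by a Maurer--Cartan argument in an appropriate model), not merely supplemented with extra algebraic cancellations. Finally, your proposed proof of $(1)\Leftrightarrow(2)$ via an energy-momentum-type tensor is plausible but is not what the paper does: the paper's contribution is only the construction of the immersion from the intrinsic Dirac system (essentially $(1)\Rightarrow(3)$ with constant norms), and it does not re-derive the $(1)\Leftrightarrow(2)$ equivalence.
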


The difficult part lies in the converse. Moreover they explicit the
reconstruction of $f$ in terms of $\varphi$, a procedure now known by the
generic term of Weierstrass formula. It must be stressed though that this
proof is entirely abstract.
\\

Partially motivating \cite{BLR}, in $\mathbb{R}^4$ only, we have seen in section
\ref{sec:intro} the {\emph{spinor representation formula}}~\eqref{eq:weierstrass-KT} 
of Konopelchenko and Taimanov, with the condition~\eqref{eq:dirac-4D-KT}, 
also named ``Dirac equation''. We will now bridge the
gap between these two approaches, by rewriting \cite{BLR} using concrete
spinor coordinates. We will prove directly (without actually resorting to the
conditions for isometric immersion of Gauss, Codazzi and Ricci) the existence
of an immersion $f$ derived from a spinor $\varphi$, and connect equation
($\ref{eq:dirac-BLR}$) to \eqref{eq:dirac-4D-KT}. Additionally, it
will be clear how one recovers a parallel spinor associated to $f$ from the
immersion itself. As in the previous sections, we will use quaternions, which
play a natural role in $\mathbb{R}^4$, and allow for shorter and more elegant
formulations. In particular, we express equations \eqref{eq:weierstrass-KT}
and \eqref{eq:dirac-4D-KT} as follows (a formalism due to \cite{He}). Let $f$
be a conformal immersion $f : M \rightarrow \mathbb{R}^4 \simeq \mathbb{H}$.
Since $\text{Spin} ( 4) \simeq \mathbb{S}^3 \times \mathbb{S}^3 \subset
\mathbb{H}^2$ acts on $\mathbb{H}$ via
\[ \forall ( p, q) \in \mathbb{S}^3 \times \mathbb{S}^3, \; \forall x \in
   \mathbb{H}, \hspace{1em} \theta_4 ( p, q) : x \mapsto p x q^{- 1} = p x
   \bar{q} \]
and $( \partial f / \partial x, \partial f / \partial y)$ is an orthonormal
basis, up to a multiplicative factor $e^{\rho}$, there exist $A, B : M
\rightarrow \mathbb{H}^{\ast}$ such that
\begin{equation}   \tag{D}
  \mathd f = \bar{A} \mathd z B = \bar{A}  ( \mathd x + i \mathd y) B = \bar{A}
  B \mathd x + \bar{A} i B \mathd y \label{eq:df}
\end{equation}
and the conformal factor is $e^{\rho} = | \bar{A} B |$. Conversely such an
expression can be integrated locally into a function $f$ if and only if
\begin{multline} 
0 = \mathd ( \mathd f) = ( ( \bar{A} i B)_x - ( \bar{A} B)_y) \mathd x
   \wedge \mathd y 
   \\
   = \bar{A}  \left[ \bar{A}^{- 1}  \left( \frac{\partial
   \bar{A}}{\partial x} + \frac{\partial \bar{A}}{\partial y} i \right) i + i
   \left( \frac{\partial B}{\partial x} + i \frac{\partial B}{\partial y}
   \right) B^{- 1} \right] B \mathd x \wedge \mathd y
\end{multline}
i.e.
\begin{equation}
  \bar{A}^{- 1}  ( \partial \bar{A} / \partial \bar{z}) i + i ( \partial
  \bar{z} \backslash \partial B) B^{- 1} = 0 \label{eq:integrability}
\end{equation}
\begin{note}
  We use here the quaternionic notation for complex derivatives, with $\mathd
  z = \mathd x + i \mathd y$ (and $i \in \mathbb{H}$)
  \[ \mathd f = ( \partial f / \partial z) \mathd z + ( \partial f / \partial
     \bar{z}) \mathd \bar{z} = \mathd z ( \partial z \backslash \partial f) +
     \mathd \bar{z}  ( \partial \bar{z} \backslash \partial f) \]
  equivalently
  \[ \partial f / \partial z = \frac{1}{2}  \left( \frac{\partial f}{\partial
     x} - \frac{\partial f}{\partial y} i \right), \; \partial f / \partial z
     = \frac{1}{2}  \left( \frac{\partial f}{\partial x} + \frac{\partial
     f}{\partial y} i \right), \]
  \[ \partial z \backslash \partial f = \frac{1}{2}  \left( \frac{\partial
     f}{\partial x} - i \frac{\partial f}{\partial y} \right), \; \partial
     \bar{z} \backslash \partial f = \frac{1}{2}  \left( \frac{\partial
     f}{\partial x} + i \frac{\partial f}{\partial y} \right) . \]
\end{note}
\medskip

If we set $A = \bar{s}_1 - j s_2$, $B = t_1 + t_2 j$ (the reason for which is
explained below), then $\mathd f = \bar{A} \mathd z B$, yields exactly the
``Weierstrass formula'' \eqref{eq:weierstrass-KT}
\[ \bar{A} \mathd z B = ( s_1 t_1 \mathd z - \bar{s}_2  \bar{t}_2 \mathd
   \bar{z}) + ( s_1 t_2 \mathd z + \bar{s}_2  \bar{t}_1 \mathd \bar{z}) j \]
when writing $\mathbb{R}^4 \simeq \mathbb{C}^2 \simeq \mathbb{C} \oplus
\mathbb{C}j =\mathbb{H}$. Furthermore, the Dirac equation
\eqref{eq:dirac-4D-KT} amounts to
\begin{equation}  \tag{I}
  ( \partial z \backslash \partial A) A^{- 1} = - h j \hspace{1em} \text{and}
  \hspace{1em} ( \partial \bar{z} \backslash \partial B) B^{- 1} = h j
  \label{eq:KT-quaternion}
\end{equation}
for some (nonconstant) complex potential $h$. Whenever the last equation is
satisfied, the integrability condition~\eqref{eq:integrability} holds immediately:
\[ j \bar{h} i + i h j = h ( j i + i j) = 0 . 
\]
The converse is false: \eqref{eq:integrability} does not imply \eqref{eq:KT-quaternion} 
in general; however one can use the gauge freedom 
$(A, B) \mapsto ( e^{i \alpha} A, e^{i \alpha} B)$ (already noted in \cite{T1}). 
Using that freedom, there is always a choice of $(A, B)$, such that
$(\partial z \backslash \partial A) A^{-1}$ lies in $\mathbb{C} j$ (and therefore 
$( \partial z \backslash \partial B) B^{-1}$ as well, because of \eqref{eq:integrability}).
They are actually opposite, and \eqref{eq:KT-quaternion} holds.
This choice is
unique up to a constant: if $( \partial \bar{z} \backslash \partial B) B^{- 1}
\in \text{Span} ( j, k)$, then
\[ ( \partial \bar{z} \backslash \partial ( e^{i \alpha} B))  ( e^{i \alpha}
   B)^{- 1} = e^{i \alpha}  \left( ( \partial \bar{z} \backslash \partial B)
   B^{- 1} + \frac{\partial \alpha}{\partial \bar{z}} \right) \]
lies in $\text{Span} ( j, k)$ if and only if $\frac{\partial \alpha}{\partial \bar{z}} =
0$.

\subsection{Spinors in two and four dimensions}

In two dimensions, we choose as above for the Clifford representation of
$\mathbb{R}^2 =\mathbb{R}e_1 \oplus \mathbb{R}e_2$ on $\Sigma_2
=\mathbb{H}$ the map $\chi_2$: $\chi_2 ( e_1) = L_i$ and $\chi_2 ( e_2) =
L_j$, where $L_i, L_j$ denote the left quaternionic multiplication by $i$ and
$j$ respectively. However, for compatibility purposes with the above
equations, we endow $\mathbb{H}$ with the complex structure $J = R_j$, 
the right multiplication by $j$. This allows to equate $\mathbb{H}$ with
$\mathbb{C}^2$, but complex numbers in this setting are linear combinations
over $\mathbb{R}$ of quaternions $1$ and $j$. To emphasize this (purely
formal) difference and make notations clear, we will write when needed
$\hat{\mathbb{C}} =\mathbb{R} \oplus j\mathbb{R} \simeq \mathbb{C}$, so
that $\mathbb{H} \simeq \hat{\mathbb{C}}^2$ ({\footnote{Because of the
non-commutativity of $\mathbb{H}$, all vector spaces over $\hat{\mathbb{C}}$
are vector spaces on the right.}}).

As usual, $\Sigma_2$ splits into $\Sigma_2^+ \oplus \Sigma_2^-$, eigenspaces
of $\chi_2 ( \omega_2^{\mathbb{C}})$ w.r.t. the eigenvalues $+ 1$ and $- 1$;
$\omega_2^{\mathbb{C}} = J e_1 \cdot e_2$ is the complex volume, and is mapped
by $\chi_2$ to $L_i L_j R_j = L_k R_j$, whose eigenspaces are
\[ \Sigma_2^+ = ( 1 - i)  \hat{\mathbb{C}}, \hspace{1em} \Sigma_2^- = ( 1 +
   i)  \hat{\mathbb{C}} . \]

The euclidean space $\mathbb{R}^4$ is identified with $\mathbb{H}$ (which
amounts to taking $1, i, j, k$ as reference basis), on which $\text{Spin} ( 4)
\simeq \mathbb{S}^3 \times \mathbb{S}^3 \subset \mathbb{H} \times
\mathbb{H}$ acts as double cover of $S O ( 4)$ via the classical
representation $\theta$: $\theta ( p, q) \cdot x = p x q^{- 1} = p x \bar{q}$.
The Clifford algebra is represented on $\Sigma_4 =\mathbb{H}^2$ via $\chi_4$,
which we define by its restriction to $\mathbb{R}^4 =\mathbb{H}$:
\[ \chi_4 ( x) = \left(\begin{array}{cc}
     0 & x\\
     - \bar{x} & 0
   \end{array}\right) \]
One checks directly that $\chi_4$ induces the following representation of
$\text{Spin} ( 4)$: $\chi_4 ( ( p, q)) = \left(\begin{array}{cc}
  p & 0\\
  0 & q
\end{array}\right)$. Note that $\chi_4 ( x)$ is a complex-linear endomorphism,
provided $\mathbb{H}^2$ is also endowed with $J$, the right multiplication by
$j$, as above, so that $\mathbb{H}^2 \simeq \hat{\mathbb{C}}^4$. The
representation decomposes into two irreducible subspaces, which are here
$\Sigma_4^+ =\mathbb{H} \oplus 0$ and $\Sigma_4^- = 0 \oplus \mathbb{H}$. A
spinor $\tilde{\varphi}$ on $\mathbb{R}^4$ is now the equivalence class of
couples $( ( p, q), ( a, b))$ where $( p, q) \in \text{Spin} ( 4)
=\mathbb{S}^3 \times \mathbb{S}^3$ and $( a, b) \in \Sigma_4
=\mathbb{H}^2$, under the following equivalence relation: for all $( p_1,
q_1) \in \mathbb{S}^3 \times \mathbb{S}^3$
\[ ( ( p, q), ( a, b)) \sim ( ( pp_1^{- 1}, q q_1^{- 1}), \chi_4 ( p_1, q_1)  (
   a, b)) = ( ( pp_1^{- 1}, q q_1^{- 1}), ( p_1 a, q_1 b)) \]
and a spinor field is a section of the quotient bundle $\text{Spin} ( 4)
\times_{\chi_4} \Sigma_4$. Finally, because $\mathbb{R}^4$ is globally
trivial, one can always write $\tilde{\varphi}$ in a preferred spinor frame,
e.g. $( 1, 1)$.
\\

The starting point of \cite{BLR} is the following remark: let $M$ be a
two-dimensional submanifold, and $\tilde{\varphi}$ be an ambient 
parallel\footnote{In $\mathbb{R}^4$, a Killing spinor is simply parallel.} 
spinor, i.e. $\tilde{\nabla} \tilde{\varphi} = 0$, where $\tilde{\nabla}$
denotes the covariant derivative of $\mathbb{R}^4$. Define on $M$ the Dirac
operator by $D = e_1 \cdot \nabla_{e_1} + e_2 \cdot \nabla_{e_2}$, where now
$\nabla$ is the covariant derivative on $M$ (here and in the following, $e_1,
e_2$ will denote any orthonormal frame). Then a simple application of the
spinorial Gauss equation
\[ \tilde{\nabla}_X \tilde{\varphi} = \nabla_X  \tilde{\varphi} + \frac{1}{2}
   \sum_{j = 1}^2 e_j \cdot \ff ( X, e_j) \cdot \tilde{\varphi} \]
yields \eqref{eq:dirac-BLR}.

To make sense of the converse result, starting from intrinsic data, one cannot
just consider the restriction $\tilde{\varphi}_{| M }$ as a spinor
on $M$ (unlike in the three-dimensional case), since the spinor spaces
$\Sigma_2$ and $\Sigma_4$ have different dimensions. Instead, one considers
spinors on $M$ and spinors on the normal bundle $E$. Let us briefly recall the
idea of B\"ar (\cite{Ba}, see also \cite{G} for a quick introduction) and consider
$\mathbb{R}^2 \oplus \mathbb{R}^2$. Each factor is equipped with its
Clifford representation as above, which we denote by $\chi_2$ and $\chi_2'$ on
$\Sigma_2$ and $\Sigma_2'$ (they are identical, the prime just makes it clear
which one we are considering). We let $\gamma$ be the map $\gamma = \chi_2
\otimes \chi'_2 ( \omega_2^{\mathbb{C}}) \oplus \text{Id}_{\Sigma_2} \otimes \chi_2'$ from
$\mathbb{R}^4 =\mathbb{R}^2 +\mathbb{R}^2$ on $\Sigma_2 \otimes \Sigma'_2$.
For $u + v \in \mathbb{R}^2 \oplus \mathbb{R}^2$, $\gamma ( u + v)^2 = - ( |
u |^2 + | v |^2) \text{id}$, hence $\gamma$ extends to a representation of the
Clifford algebra $C \ell ( \mathbb{R}^4)$ on $\Sigma_2 \otimes \Sigma_2'$. by
the universal property, $\gamma$ is equivalent to $\chi_4$: there exists a
isomorphism of algebra $\Psi : \Sigma_2 \otimes \Sigma'_2 \rightarrow
\Sigma_4$ making the diagram commutative
\[ \begin{array}{ccc}
     \Sigma_2 \otimes \Sigma'_2 & \overset{\gamma ( x)}{\longrightarrow} &
     \Sigma_2 \otimes \Sigma'_2\\
     \hspace{1em} \downarrow  \Psi &  & \hspace{1em} \downarrow \Psi\\
     \Sigma_4 & \overset{\chi_4 ( x)}{\longrightarrow} & \Sigma_4
   \end{array} \]
We shall make this correspondence explicit below.

We can now define the {\emph{twisted spinor bundle}} $\Sigma$ on a surface
$M$ with a rank two vector bundle $E$, by taking sections in $\Sigma M \otimes
\Sigma E$. More precisely, let $\sigma = ( s, u)$ be a spinor field on $M$,
i.e. a moving spinor frame $s$ (over an orthogonal frame $e_1, e_2$ of $T M$)
and a vector field $u$ on $\Sigma_2$, up to the equivalence relation $( s, u)
\sim ( s a^{- 1}, \chi_2 ( a) u)$; similarly let $\sigma' = ( s', u')$ be the
equivalence class of a spinor frame $s'$ of $E$ (over an orthogonal frame
$e_3, e_4$ of $E$) and $u'$ a vector field on $\Sigma'_2$. Then, when $M
\subset \mathbb{R}^4$, the couple $( s, s')$ identifies canonically with a
spinor frame of $\mathbb{R}^4$ (over the frame $( e_1, \ldots, e_4)$) and an
element $\sigma \otimes \sigma'$ identifies with (the class of) $( ( s, s'),
\Psi ( u \otimes u'))$. This allows to identify ambient spinors
$\tilde{\varphi}$ restricted to $M$ with sections of $\Sigma M \otimes \Sigma
E$, which are intrinsic objects.

The identification $\Sigma_2 \otimes \Sigma_2' \simeq \Sigma_4$ goes as follows,
using the quaternionic notations. From the natural basis of $\Sigma_2 \simeq
\mathbb{H}$ over $\hat{\mathbb{C}}$: $( 1 - i), ( 1 + i)$, we build the
following basis
\[ ( 1 - i) \otimes ( 1 - i), ( 1 + i) \otimes ( 1 + i), ( 1 - i) \otimes ( 1
   + i), ( 1 + i) \otimes ( 1 - i) \]
which actually spans $( \Sigma_2^+ \otimes {\Sigma'}_2^+) \oplus ( \Sigma_2^-
\otimes{ \Sigma'}_2^-) \oplus ( \Sigma_2^+ \otimes {\Sigma'}_2^-) \oplus (
\Sigma_2^- \otimes {\Sigma'}_2^+)$. The corresponding basis in $\Sigma_4 \simeq
\mathbb{H}^2$ spans $\Sigma_4^{+ +} \oplus \Sigma_4^{- -} \oplus \Sigma_4^{+
-} \oplus \Sigma_4^{- +}$, where $( a, b) \in \Sigma_4^{\varepsilon
\varepsilon'}$ if and only if $\chi_4 ( \omega_2^{\mathbb{C}})  ( a, b) = ( \varepsilon
a, \varepsilon' b)$. Then a possible (but not unique) choice of basis is
\[ \left(\begin{array}{c}
     1 + k\\
     0
   \end{array}\right), \left(\begin{array}{c}
     - i - j\\
     0
   \end{array}\right), \left(\begin{array}{c}
     0\\
     i + j
   \end{array}\right), \left(\begin{array}{c}
     0\\
     - 1 - k
   \end{array}\right) \]
and $\Psi$ is the map sending on basis to the other:
\begin{multline}  \label{eq:Psi}
\Psi (  ( 1 - i) \otimes ( 1 - i) a^{+ +} + ( 1 + i) \otimes ( 1
   + i) a^{- -} + ( 1 - i) \otimes ( 1 + i) a^{+ -} + ( 1 + i) \otimes ( 1 -
   i) a^{- +} 
   \\
   = \left(\begin{array}{c}
     ( 1 + k) a^{+ +} - ( i + j) a^{- -}\\
     ( i + j) a^{+ -} - ( 1 + k) a^{- +}
   \end{array}\right) = \left(\begin{array}{c}
     a\\
     b
   \end{array}\right)
\end{multline}

\subsection{The abstract Dirac equation}

Using this correspondence we can write the Dirac equation, {\emph{using only
intrinsic data}} on $M$ and a given rank two bundle $E$. We will assume that
$M$ is parametrized locally by the conformal coordinate $z = x + i y$, inducing a
frame $( e_1, e_2) = e^{- \rho}  ( \partial / \partial_x, \partial / \partial
y)$, where $\rho$ is the conformal factor, and we will also denote
\[ c_x = \langle \nabla^E_{\partial / \partial_x} e_3, e_4 \rangle = e^{\rho} 
   \langle \nabla^E_{e_1} e_3, e_4 \rangle \;, \hspace{1em} c_y = \langle
   \nabla^E_{\partial / \partial_y} e_3, e_4 \rangle = e^{\rho}  \langle
   \nabla^E_{e_1} e_3, e_4 \rangle \]
the normal connection coefficients, and
\[ \text{for } \beta = 1, 2 \hspace{1em} H_{\beta} = \frac{1}{2}  \langle \ff (
   e_1, e_1) + \ff ( e_2, e_2), e_{\beta} \rangle \]
the coefficients which will (eventually) be the mean curvature vector
coordinates, in a chosen orthonormal frame $( e_3, e_4)$ of $E$. Since
$\nabla^M, \nabla^E$ (resp. $\nabla^{\Sigma M}, \nabla^{\Sigma E}$)
are the connections (resp. spinorial connections) on $M$ and $E$ , then $\nabla
= \nabla^{\Sigma M} \otimes \text{Id}_{\Sigma E} + \text{Id}_{\Sigma M}
\otimes \nabla^{\Sigma E}$ is the connection on the twisted spinor bundle
$\Sigma$. For $\sigma \in \Gamma ( \Sigma M)$,
\[ \nabla_X^{\Sigma M} \sigma = L_X \sigma + \frac{1}{2}  \sum_{\alpha <
   \beta} \langle \nabla^M_X e_{\alpha}, e_{\beta} \rangle e_{\alpha} \cdot
   e_{\beta} \cdot \sigma = L_X \sigma + \frac{1}{2}  \langle \nabla^M_X e_1,
   e_2 \rangle e_1 \cdot e_2 \cdot \sigma \]
so that
\begin{eqnarray*}
  \nabla^{\Sigma M}_{e_1} ( 1 - \varepsilon i) & = & \underset{=
  0}{\underbrace{L_{e_1}  ( 1 - \varepsilon i)}} + \frac{1}{2}  \langle
  \nabla^M_{e_1} e_1, e_2 \rangle i j ( 1 - \varepsilon i)\\
  & = & - \frac{e^{- \rho}}{2}  \frac{\partial \rho}{\partial y} \varepsilon
  ( 1 - \varepsilon i)  ( - j)
\end{eqnarray*}
Similarly,
\[
\begin{array}{l}
\nabla^{\Sigma M}_{e_2} ( 1 - \varepsilon i) = 
\frac{e^{- \rho}}{2}  \frac{\partial \rho}{\partial x} \varepsilon (1 - \varepsilon i)  ( - j)
\\
\nabla^{\Sigma E}_{e_1}  ( 1 - \varepsilon' i) = 
\frac{e^{- \rho}}{2} c_1 \varepsilon'  ( 1 - \varepsilon' i)  ( - j)
\\
\nabla^{\Sigma E}_{e_2}  ( 1 - \varepsilon' i) = \frac{e^{- \rho}}{2} c_2
   \varepsilon'  ( 1 - \varepsilon' i)  ( - j)
\end{array}
\]
Let $D = e_1 \cdot \nabla_{e_1} + e_2 \cdot \nabla_{e_2}$ be the Dirac
operator on $\Sigma M \otimes \Sigma E$, then, 
\[
D ( ( 1 - \varepsilon i) \otimes ( 1 - \varepsilon' i))
= \frac{e^{- \rho}}{2}  ( 1 + \varepsilon i) \otimes ( 1 - \varepsilon'
  i)  \left( \varepsilon \varepsilon'  \frac{\partial \rho}{\partial x} + c_2
  + \left( \varepsilon'  \frac{\partial \rho}{\partial y} - \varepsilon c_1
  \right) j \right)
\]
and for any $\hat{\mathbb{C}}$-valued $a$,
\begin{multline}
D ( ( 1 - \varepsilon i) \otimes ( 1 - \varepsilon' i) a) 
= 
\\
( 1 +   \varepsilon i) \otimes ( 1 - \varepsilon' i) e^{- \rho}  \left( \frac{c_2 +
   \varepsilon \varepsilon' \rho_x + ( \varepsilon'
   \rho_y - c_1 \varepsilon) j}{2} a 
   + \varepsilon
   \varepsilon'  \frac{\partial a}{\partial x} + \varepsilon' j \frac{\partial
   a}{\partial x} \right)
\end{multline}
Summing up,
\begin{eqnarray*}
  D \varphi & = & D ( ( 1 - i) \otimes ( 1 - i) a^{+ +} + ( 1 + i) \otimes ( 1
  + i) a^{- -} 
  \\
  && + ( 1 - i) \otimes ( 1 + i) a^{+ -} + ( 1 + i) \otimes ( 1 - i)  a^{- +})
  \\
  & = & ( 1 - i) \otimes ( 1 - i) e^{- \rho}  
  \left( \frac{c_y - \rho_x + \left( \rho_y
  + c_x \right) j}{2} a^{- +} - a^{- +}_x + j
  a^{- +}_y \right)
  \\
  &  & + ( 1 + i) \otimes ( 1 + i) e^{- \rho}  
  \left( \frac{c_y - \rho_x + \left( - \rho_y - c_x \right) j}{2} a^{+ -} - a^{+ -}_x - j
  a^{+ -}_y \right)
  \\
  &  & + ( 1 - i) \otimes ( 1 + i) e^{- \rho}  \left( \frac{c_y
  + \rho_x + \left( - \rho_y + c_x \right) j}{2}  a^{- -} + a^{- -}_x - j
  a^{- -}_y \right)
  \\
  &  & + ( 1 + i) \otimes ( 1 - i) e^{- \rho}  \left( \frac{c_y
  + \rho_x + \left( \rho_y
  - c_x \right) j}{2} a^{+ +} + a^{+ +}_x + j
  a^{+ +}_y \right)
\end{eqnarray*}
Switching to the notation in $\Sigma_4$, through the isomorphism $\Psi$ in 
\eqref{eq:Psi} (which we will be implicit), we now write $\varphi = ( a, b)$, so
\[
D \varphi = D \left(\begin{array}{c}
    a\\
    b
  \end{array}\right) 
  = e^{- \rho}  \left(\begin{array}{c}
    \frac{\partial b}{\partial x} + i \frac{\partial b}{\partial y} + \left(
    \frac{i c_x - c_y}{2} + \frac{\partial \rho}{\partial \bar{z}} \right) b\\
    - \frac{\partial a}{\partial x} + i \frac{\partial a}{\partial y} + \left(
    - \frac{i c_x + c_y}{2} - \frac{\partial \rho}{\partial z} \right) a
  \end{array}\right) \, .
\]
Since
\begin{eqnarray*}
  \vec{H} \cdot \varphi & = & \left(\begin{array}{cc}
    0 & H_3 j + H_4 k\\
    H_3 j + H_4 k & 0
  \end{array}\right)  \left(\begin{array}{c}
    a\\
    b
  \end{array}\right) = \left(\begin{array}{c}
    ( H_3 j + H_4 k) b\\
    ( H_3 j + H_4 k) a
  \end{array}\right)
\end{eqnarray*}
the Dirac equation is written intrinsically as
\begin{equation}
  \left\{ \begin{array}{l}
    \left( \frac{\partial b}{\partial x} + i \frac{\partial b}{\partial y}
    \right) b^{- 1} = - \frac{\partial \rho}{\partial \bar{z}} + \frac{c_y -
    i c_x}{2} + e^{\rho}  ( H_3 j + H_4 k)\\
    \left( \frac{\partial a}{\partial x} - i \frac{\partial a}{\partial y}
    \right) a^{- 1} = - \frac{\partial \rho}{\partial z} - \frac{c_y +
    i c_x}{2} - e^{\rho}  ( H_3 j + H_4 k)
  \end{array} \right. \label{eq:dirac-sys1}
\end{equation}
Or, using the quaternionic notation for complex derivatives, 
\begin{equation}
  \left\{ \begin{array}{l}
    2 ( \partial z \backslash \partial a) a^{- 1} = - \frac{\partial
    \rho}{\partial z} - \frac{c_y + i c_x}{2} - e^{\rho}  ( H_3 j + H_4 k)\\
    2 ( \partial \bar{z} \backslash \partial b) b^{- 1} = - \frac{\partial
    \rho}{\partial \bar{z}} + \frac{c_y - i c_x}{2} + e^{\rho}  ( H_3 j + H_4
    k)
  \end{array} \right. \label{eq:dirac-sys2}
\end{equation}


\subsection{Building the immersion}

To show that \eqref{eq:dirac-sys1} (or \eqref{eq:dirac-sys2}) implies the
local existence of an immersion with the prescribed metric data, let us
define the following $\mathbb{H}$-valued 1-form on $M$
\[ \xi = e^{\omega} a^{- 1}  ( \mathd x + i \mathd y) b \]
for the spinor field $\varphi=(a,b)$. As in \cite{BLR}, we prove the following
\begin{theorem} \label{thm:3}
  If $( a, b)$ satisfy \eqref{eq:dirac-sys1} {\emph{with $a$ and $b$ of
  constant norm}}, then $\xi$ is closed, and the (locally defined) application
  $f : M \rightarrow \mathbb{H}$ such that $\mathd f = \xi$ induces the
  original metric data. Furthermore, we obtain the restriction of a parallel
  ambient spinor $\tilde{\varphi}$ by setting $\tilde{\varphi} = ( ( p, q), (
  a, b))$ for $( p, q)$ a moving spinor frame on $M$ such that $p q^{- 1} =
  e_1$, $p i q^{- 1} = e_2$, $p j q^{- 1} = e_3$ and $p k q^{- 1} = e_4$.
\end{theorem}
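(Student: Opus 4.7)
My plan mirrors the three-dimensional argument of the previous section, exploiting the hypothesis that $|a|$ and $|b|$ are constant to identify $\xi$ with a quaternionic Weierstrass form and then exhibiting a constant (equivalently parallel) ambient spinor whose restriction recovers the intrinsic one.

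Setting $\bar{A} := e^\rho a^{-1}$ and $B := b$, the form $\xi = \bar{A}\, \mathrm{d}z\, B$ fits the template~\eqref{eq:df}, so $\mathrm{d}\xi = 0$ is equivalent to the integrability condition~\eqref{eq:integrability}, namely
\[
\bar{A}^{-1} \frac{\partial \bar{A}}{\partial \bar{z}}\, i + i\, (\partial \bar{z} \backslash \partial B)\, B^{-1} = 0.
\]
I would substitute the Dirac equations~\eqref{eq:dirac-sys2} for the derivatives of $a$ and $b$ into this expression; since $|a|$ is constant, $a^{-1} = \bar{a}/|a|^2$, and the logarithmic derivative $\bar{A}^{-1} \partial_{\bar{z}} \bar{A}$ is computable directly from~\eqref{eq:dirac-sys2} after taking quaternionic conjugation. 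The $\rho$-derivative and normal-connection $(c_y \pm i c_x)/2$ contributions enter the two summands of \eqref{eq:integrability} with opposite signs and cancel, while the mean-curvature piece $e^\rho (H_3 j + H_4 k) \in e^\rho\, \mathbb{C}j$ anticommutes with $i$, producing the same vanishing as in the identity $j \bar{h} i + i h j = 0$ noted after \eqref{eq:KT-quaternion}.

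Closedness then yields a local primitive $f : M \to \mathbb{H}$ with $\mathrm{d}f = \xi$, hence $\partial_x f = e^\rho a^{-1} b$ and $\partial_y f = e^\rho a^{-1} i b$. After the innocuous normalization $|a| = |b| = 1$ (permissible because both norms are constant), the pair $(e^{-\rho} \partial_x f, e^{-\rho} \partial_y f) = (\bar{a}b, \bar{a}ib)$ is orthonormal in $\mathbb{H}$, confirming that $f$ is conformal with conformal factor $e^\rho$. Set $p := \bar{a}$, $q := \bar{b}$; then $pq^{-1} = e_1$, $p i q^{-1} = e_2$, and $e_3 := p j q^{-1}$, $e_4 := p k q^{-1}$ complete an oriented orthonormal frame of $\mathbb{R}^4$ along $f(M)$, whose last two vectors span the normal bundle of $f$. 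In the canonical trivialization of $\mathrm{Spin}(\mathbb{R}^4)$,
\[
\tilde{\varphi} := ((p,q),(a,b)) \sim ((1,1),(pa,qb)) = ((1,1),(1,1))
\]
is manifestly constant and hence parallel, proving the ``furthermore'' clause. By B\"ar's identification, $\tilde{\varphi}_{|M}$ is the intrinsic twisted spinor $(a,b)$, so the spinorial Gauss relation $\tilde\nabla_X \tilde\varphi = \nabla_X \varphi + \tfrac{1}{2} \sum_j e_j \cdot \ff(X,e_j) \cdot \varphi = 0$ determines $\nabla_X \varphi$ in terms of the second fundamental form of $f$; tracing and comparing with~\eqref{eq:dirac-sys1} identifies the mean curvature vector and normal connection of $f$ with the prescribed $(H_3,H_4)$ and $(c_x,c_y)$, recovering the original metric data.

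The main obstacle is the first step: non-commutativity of $i$ with the quaternionic functions $a, b$ turns the substitution of \eqref{eq:dirac-sys2} into \eqref{eq:integrability} into a careful bookkeeping of quaternion multiplication orders rather than a purely scalar cancellation. However, the structural parallel with lemma~\ref{thm:vanishing} of the three-dimensional case---where the torsion $\tau$ plays the role of the mean-curvature components here---makes it clear that the only genuinely nontrivial cancellation reduces to the anticommutation $\{i, \mathbb{C}j\} = 0$.
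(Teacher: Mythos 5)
Your proposal is correct in outline, but it takes a genuinely different route from the paper's own proof, and the paper even hints at this alternative explicitly: after stating Theorem~\ref{thm:3} it remarks ``We can prove closedness directly, however we will choose a slightly different route, in order to connect to the formalism of Konopelchenko and Taimanov.'' You have taken the direct route. The paper instead sets $A = e^{u+iv+\rho/2}a$, $B = e^{-u+iv+\rho/2}b$ with $u,v$ real, reduces the condition that $(\partial z\backslash\partial A)A^{-1}$ and $(\partial\bar z\backslash\partial B)B^{-1}$ land in $\mathbb{C}j$ to a scalar $\bar\partial$-problem for $u-iv$ (solvable by H\"ormander's theorem), and then closedness follows automatically because the resulting $(A,B)$ satisfy the Konopelchenko--Taimanov equation~\eqref{eq:KT-quaternion}. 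Your simpler choice $\bar A=e^{\rho}a^{-1}$, $B=b$ skips the $\bar\partial$-problem entirely and verifies \eqref{eq:integrability} directly from \eqref{eq:dirac-sys2}; this is more elementary, but it does not by itself exhibit the KT-type potential $h$, which is the whole point the paper is trying to make. The two approaches buy different things: yours gives a shorter existence proof, theirs realizes the immersion data in the KT normal form.

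One caution: your sketch of the cancellation is a little too optimistic as stated. The $\rho$-derivative and normal-connection pieces do \emph{not} simply appear ``with opposite signs'' in the two summands of \eqref{eq:integrability}, because the derivative entering via $\bar A^{-1}\partial_{\bar z}\bar A$ has the imaginary unit $i$ on the right of $\partial_y$ whereas the Dirac equation~\eqref{eq:dirac-sys2} for $a$ has it on the left; pushing the $i$ through costs you a term proportional to the $\hat{\mathbb{C}}$-projection of $(\partial_y a)a^{-1}$. That this projection is precisely what is needed to close the cancellation follows from two facts you should invoke explicitly: (i) $|a|$ constant makes $(\partial_\mu a)a^{-1}$ purely imaginary, and (ii) the Dirac equation fixes its $i$-component to $-\rho_x/2-c_y/2$. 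The mean-curvature piece does indeed cancel by the anticommutation $\{i,\mathbb{C}j\}=0$ exactly as you say. You do flag the non-commutativity issue, so there is no gap in the plan---only the intermediate heuristic is phrased more simply than the actual bookkeeping warrants. The ``furthermore'' clause ($p=\bar a$, $q=\bar b$, $\tilde\varphi\sim((1,1),(1,1))$) is correct and matches the construction in the paper's proposition.
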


We can prove closedness directly, however we will choose a slightly different
route, in order to connect to the formalism of Konopelchenko and Taimanov.

\begin{proposition}
  If $( a, b)$ satisfy \eqref{eq:dirac-sys2} with $a$ and $b$ of constant
  norm, then the $\mathbb{H}$-valued 1-form $\xi = e^{\omega} a^{- 1}  (
  \mathd x + i \mathd y) b$ can be written as $\xi = \bar{A} \, \mathd z B$, with
  $A, B$ satisfying \eqref{eq:KT-quaternion}. In particular $\xi$ is closed,
  and the conclusions of theorem~\ref{thm:3} hold.
\end{proposition}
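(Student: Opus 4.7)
The plan is to construct an explicit pair $(A, B)$ satisfying both $\xi = \bar A \, \mathd z \, B$ and \eqref{eq:KT-quaternion}; once this is done, the closedness of $\xi$ follows immediately from the observation made just below \eqref{eq:KT-quaternion} that \eqref{eq:KT-quaternion} forces the integrability relation \eqref{eq:integrability}, via the identity $(hj)i + i(hj) = 0$, and the remaining conclusions of Theorem~\ref{thm:3} follow from a direct check. Normalize $|a| = |b| = 1$ (allowed by absorbing the constants $|a|, |b|$ into $\omega$), and look for $A = e^{\phi} a$, $B = e^{\psi} b$ with $\hat{\mathbb{C}}$-valued functions $\phi, \psi$. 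Since $\mathd z$, $e^{\phi}$, $e^{\psi}$ mutually commute, the identity $\xi = \bar A \, \mathd z \, B$ reduces to the single complex condition $\bar\phi + \psi = \omega$, i.e.\ $\Re\phi + \Re\psi = \omega$ and $\Im\phi = \Im\psi =: v$.

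Computing $(\partial z \backslash \partial A) A^{-1} = \phi_z + e^{\phi}[(\partial z \backslash \partial a) a^{-1}] e^{-\phi}$ and the analogous expression for $B$, and substituting the Dirac system \eqref{eq:dirac-sys2}, the $(1, i)$- and $(j, k)$-parts separate. The $(j, k)$-part of $(\partial z \backslash \partial a) a^{-1}$ is $-\tfrac{1}{2} e^{\omega}(H_3 j + H_4 k)$, and conjugation by $e^{\phi}$ rotates it by $-2v$ in the $(j, k)$-plane, producing $-hj$ with $h = \tfrac{1}{2} e^{\omega}(H_3 + i H_4) e^{2 i v}$; since $\Im\psi = \Im\phi$, the corresponding computation for $B$ produces $+hj$ with the same $h$, so the $(j, k)$-part of \eqref{eq:KT-quaternion} is automatic. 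The $(1, i)$-part of \eqref{eq:KT-quaternion} translates into the linear first-order PDEs
\[
\phi_z = \tfrac{1}{2}\,\omega_z + \tfrac{1}{4}(c_y + i c_x),
\qquad
\psi_{\bar z} = \tfrac{1}{2}\,\omega_{\bar z} - \tfrac{1}{4}(c_y - i c_x).
\]

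Splitting $\phi = u_\phi + iv$, $\psi = u_\psi + iv$ and matching real and imaginary parts, appropriate sums of the resulting four scalar equations yield $(u_\phi + u_\psi)_x = \omega_x$ and $(u_\phi + u_\psi)_y = \omega_y$, so $u_\phi + u_\psi = \omega$ (automatically consistent with the $\xi$-matching), while the remaining combinations give a first-order system for $u_\phi - u_\psi$ in terms of $v$ whose integrability condition is the single Poisson equation
\[
\Delta v = \tfrac{1}{2}\bigl(\partial_x c_x + \partial_y c_y\bigr).
\]
This Poisson equation is the principal obstacle: because the normal connection of $E$ need not be flat, a naively symmetric ansatz $\phi = \psi$ would not work, but on the simply connected planar domain the Poisson equation is locally solvable, and with such $v$ fixed one integrates to recover $u_\phi, u_\psi$ uniquely up to constants.

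With the resulting $(A, B)$, \eqref{eq:KT-quaternion} holds, hence \eqref{eq:integrability} holds and $\xi$ is closed. A local primitive $f$ with $\mathd f = \xi$ then exists, and $|\bar A B| = |A||B| = e^{u_\phi + u_\psi} = e^{\omega}$ confirms that the induced metric $|\bar A B|^2 |\mathd z|^2 = e^{2\omega} |\mathd z|^2$ matches the original. Finally, for the parallel ambient spinor, pick $(p, q) \in \text{Spin}(4)$ with $pq^{-1} = e_1$, $piq^{-1} = e_2$, $pjq^{-1} = e_3$, $pkq^{-1} = e_4$ as in Theorem~\ref{thm:3}; then $\tilde\varphi := ((p, q), (a, b))$ restricts to the original twisted spinor $\varphi$, and its parallelism is verified by retracing the spinorial Gauss formula underlying \eqref{eq:dirac-sys2}.
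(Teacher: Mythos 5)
Your proof is correct and follows essentially the same strategy as the paper: factor $\xi$ as $\bar A\,\mathd z\,B$ by multiplying $a$ and $b$ by complex-exponential gauge factors, show the $(j,k)$-components of \eqref{eq:KT-quaternion} come for free from \eqref{eq:dirac-sys2} after a rotation by the phase, and reduce the $(1,i)$-components to a scalar PDE for the gauge. The paper's ansatz is $A=e^{u+iv+\rho/2}a$, $B=e^{-u+iv+\rho/2}b$ with $u,v$ real, which builds in $u_\phi+u_\psi=\omega$ from the outset and reduces the residual conditions to a single $\bar\partial$-equation $\partial(u-iv)/\partial\bar z=(c_y-ic_x)/4$, solved by a citation to H\"ormander; you instead split into four real first-order equations, recover $u_\phi+u_\psi=\omega$ as a consequence, and land on the Poisson equation $\Delta v=\frac{1}{2}(\partial_x c_x+\partial_y c_y)$ as the integrability condition for $u_\phi-u_\psi$. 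The two are equivalent on a simply connected domain (the $\bar\partial$-problem encodes precisely that Poisson equation plus the compatible recovery of $u_\phi-u_\psi$); the paper's $\bar\partial$ formulation is more compact, while your Poisson formulation is more elementary and makes the role of the normal curvature $\partial_x c_x+\partial_y c_y$ visible. Two minor corrections: you write that $\phi,\psi$ are $\hat{\mathbb{C}}$-valued, but in the paper's conventions $\hat{\mathbb{C}}=\mathbb{R}\oplus j\mathbb{R}$, whereas the gauge factors must lie in $\operatorname{span}(1,i)$ (as your own computation $e^\phi j e^{-\phi}=je^{-2iv}$ requires), so you mean $\mathbb{C}$-valued. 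Also, your closing sentence on the parallel ambient spinor is somewhat terse; the paper makes the verification concrete by choosing the constant spinor $\tilde\varphi=((a^{-1},b^{-1}),(a,b))\sim((1,1),(1,1))$ and comparing term by term the two Dirac systems that $(a,b)$ satisfies to match the normal connection and mean curvature with the prescribed data.
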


\begin{proof}
  Note first that the couples $(a, b)$ and $( A, B)$ are different in nature: $|
  a |$ and $| b |$ are constant and equal by hypothesis (we will assume that
  $| a | = | b | = 1$ for simplicity), while $A, B$ clearly are not, unless
  the induced metric is flat. Nevertheless, $(A, B)$ can be derived from $(a, b)$
  by a process similar to a gauge change, up to the conformal factor. Indeed
  write
  \[ A = e^{u + iv + \rho / 2} a, \hspace{1em} B = e^{- u + iv + \rho / 2} \]
  where $u, v$ are real-valued functions to be determined. Then
  \[ \bar{A} \mathd z B = a^{- 1} e^{u - iv + \rho / 2} \mathd z e^{- u + iv +
     \rho / 2} b = e^{\rho} a^{- 1} \mathd z b = \xi . \]
  To prove our claim that $\xi$ is closed, we need only show that $A, B$
  satisfy the equation \eqref{eq:KT-quaternion} for a suitable choice of $u$
  and $v$.
  \begin{eqnarray*}
    ( \partial z \backslash \partial A) A^{- 1} & = & \frac{\partial ( u + iv +
    \rho / 2)}{\partial z} + e^{u + iv + \rho / 2}  ( \partial z \backslash
    \partial a) a^{- 1} e^{u + iv + \rho / 2}\\
    & = & \frac{\partial u}{\partial z} + i \frac{\partial v}{\partial z} -
    \frac{c_y + i c_x}{4} - \frac{e^{\rho + 2 iv}}{2}  ( H_3 j + H_4 k)
  \end{eqnarray*}
  because complex numbers commute, and commute modulo conjugation with $j$ and
  $k$. Similarly,
  \[ ( \partial \bar{z} \backslash \partial B) B^{- 1} = - \frac{\partial
     u}{\partial \bar{z}} + i \frac{\partial v}{\partial \bar{z}} + \frac{c_y
     - i c_x}{4} + \frac{e^{\rho + 2 iv}}{2}  ( H_3 j + H_4 k) . \]
  We want to solve in $\mathbb{C}$
  \[ \renewcommand*{\arraystretch}{2}
  \left\{ \begin{array}{l}    
       \displaystyle \frac{\partial u}{\partial z} + i \frac{\partial v}{\partial z} =
       \frac{c_y + i c_x}{4}
       \\
       \displaystyle \frac{\partial u}{\partial \bar{z}} - i \frac{\partial v}{\partial
       \bar{z}} = \frac{c_y - i c_x}{4}
     \end{array} \right. \]
  but the second equation is the conjugate of the first one. Under smoothness
  assumption (see~\cite[Theorem 4.2.5]{Ho}), there is always a solution of 
  the $\bar{\partial}$ problem
  \[ \frac{\partial ( u - iv)}{\partial \bar{z}} = \frac{c_y - i c_x}{4} . \]
  Given this solution, we remain with
  \[ ( \partial z \backslash \partial A) A^{- 1} = - h j \hspace{1em} \text{and}
     \hspace{1em} ( \partial \bar{z} \backslash \partial B) B^{- 1} = h j \]
  where
  \[ h = \frac{e^{\rho + 2 iv}}{2}  ( H_3 + H_4 i) \]
  depends on the mean curvature vector. We have obtained exactly the Dirac
  equation of Konopelchenko and Taimanov, which ensures integrability, as we
  proved earlier.
  
  The map $f$ is an immersion because, by construction, $A$ and $B$ are not
  allowed to vanish. There remains to show that $f$ induces on $M$ and its
  normal bundle the same metric data we started with. Because of $\mathd f =
  e^{\rho} a^{- 1} \mathd z b$, the metric on $M$ is clearly the one desired.
  For the normal bundle connection and second fundamental form, just pick up
  the constant spinor field $\tilde{\varphi} = ( ( a^{- 1}, b^{- 1}), ( a, b))
  \sim ( ( 1, 1), ( 1, 1))$. We know from the beginning that
  $\tilde{\varphi}_{| M }$ satisfies the Dirac equation
  \eqref{eq:dirac-BLR}, which in intrinsic coordinates yields
  \eqref{eq:dirac-sys1} for the induced metric and spinor coefficients $a, b$.
  So that $a, b$ satisfy two differential systems, and we can identify the
  right hand side term by term, using the fact that $\mathbb{H}$ is a vector
  space (so quaternions split along the basis $( 1, i, j, k)$). E.g. $-
  \frac{\partial \rho}{\partial \bar{z}} + \frac{c_y - i c_x}{2}$ has to
  coincide with $- \frac{\partial \rho}{\partial \bar{z}} + \frac{\langle
  \nabla^{\bot}_{\partial / \partial_y} e_3, e_4 \rangle - i \langle
  \nabla^{\bot}_{\partial / \partial_x} e_3, e_4 \rangle}{2}$ for the induced
  normal covariant derivative, which proves that both connections are equal.
  The same holds for the mean curvature.
\end{proof}

We therefore reach the same conclusion as \cite{BLR}: The Dirac equation
\eqref{eq:dirac-BLR} on $\Sigma M \otimes \Sigma E$ for a spinor field
$\varphi$ such that $| \varphi^+ | = | \varphi^- |$ is equivalent to the
equations of Gauss, Codazzi and Ricci for surfaces in $\mathbb{R}^4$.


\bigskip

\end{document}